\theoremstyle{plain}
\newtheorem{theorem}{Theorem}
\newtheorem{lemma}{Lemma}
\newtheorem{proposition}{Proposition}
\newtheorem{corollary}{Corollary}
\theoremstyle{definition}
\newtheorem{example}{Example}
\newtheorem{definition}{Definition}
\newtheorem{pr}{Question}
\begin{document}

\begin{center}\Large
\textbf{On classes of finite groups with simple non-abelian chief factors}
\normalsize

\smallskip
V.\,I. Murashka

 \{mvimath@yandex.ru\}

 Francisk Skorina Gomel State University, Gomel, Belarus\end{center}

\textbf{Abstract.}
Let $\mathfrak{J}$ be a class of non-abelian simple groups and $\mathfrak{X}$ be a class of groups. A chief factor $H/K$ of  a group $G$ is called   $\mathfrak{X}$-central  in $G$ provided    $(H/K)\rtimes G/C_G(H/K)\in\mathfrak{X}$. We say that   $G$ is a $\mathfrak{J}cs$-$\mathfrak{X}$-\emph{group} if   every chief $\mathfrak{X}$-factor of $G$ is $\mathfrak{X}$-central and  other chief factors of $G$ are simple $\mathfrak{J}$-groups. We use  $\mathfrak{X}_{\mathfrak{J}cs}$ to denote the class of all $\mathfrak{J}cs$-$\mathfrak{X}$-groups.  A subgroup $U$ of a group $G$ is called $\mathfrak{X}$-\emph{maximal} in $G$
provided that $(a)$ $U\in\mathfrak{X}$, and $(b)$ if $U\leq V \leq G$ and $V\in\mathfrak{X}$, then $U = V$.
In this paper we described the structure of $\mathfrak{J}cs$-$\mathfrak{H}$-groups for a solubly saturated formation $\mathfrak{H}$ and all hereditary saturated formations $\mathfrak{F}$  containing all nilpotent groups such that the $\mathfrak{F}_{\mathfrak{J}cs}$-hypercenter of $G$ coincides with the intersection of all $\mathfrak{F}_{\mathfrak{J}cs}$-maximal subgroups of $G$ for every group $G$.

 \textbf{Keywords.} Finite groups; $c$-supersoluble groups; $\mathfrak{J}cs$-$\mathfrak{F}$-groups; hereditary saturated formation; solubly saturated formation; $\mathfrak{F}$-hypercenter of a group.

\textbf{AMS}(2010). 20D25,  20F17,   20F19.

\section*{Introduction}

All groups considered here will be  finite. Through   $G$, $p$, $\mathfrak{X}$ and $\mathfrak{J}$ we always denote here respectively a finite group, a prime, a class of groups  and     a class of non-abelian simple groups.

Recall that a subgroup $U$ of  $G$ is called $\mathfrak{X}$-\emph{maximal} in $G$
provided that $(a)$ $U\in\mathfrak{X}$, and $(b)$ if $U\leq V \leq G$ and $V\in\mathfrak{X}$, then $U = V$ \cite[p. 288]{s8}. The symbol $\mathrm{Int}_\mathfrak{X}(G)$
denotes the intersection of all $\mathfrak{X}$-maximal subgroups of $G$. Recall that a chief factor $H/K$ of  $G$ is called   $\mathfrak{X}$-\emph{central} in $G$ provided    $(H/K)\rtimes G/C_G(H/K)\in\mathfrak{X}$ (see \cite[p. 127--128]{s6}) and the symbol $\mathrm{Z}_\mathfrak{X}(G)$ denotes the $\mathfrak{X}$-hypercenter of $G$, that is, the largest normal subgroup of $G$
such that every chief factor $H/K$ of $G$ below it is $\mathfrak{X}$-central.  If $\mathfrak{X}=\mathfrak{N}$ is the class of all nilpotent groups, then $\mathrm{Z}_\mathfrak{N}(G)$ is the hypercenter of $G$.

Note that the intersection of maximal abelian subgroups of $G$ is the center of $G$. According to     R. Baer \cite{h1},   the intersection of maximal nilpotent subgroups of $G$ coincides with the hypercenter of $G$.   In \cite[Example 5.17]{h4} it was shown that  the intersection of maximal supersoluble subgroups of $G$ does not necessary coincide with the  supersoluble hypercenter of $G$.
 L.\,A. Shemetkov possed the following question on  Gomel Algebraic seminar in 1995:

\begin{pr}\label{q1}    For what non-empty normally hereditary
solubly saturated formations $\mathfrak{X}$ do the equality
$\mathrm{Int}_\mathfrak{X}(G)=\mathrm{Z}_\mathfrak{X}(G)$ hold for every group $G$?\end{pr}

The solution to this  question for hereditary saturated formations was obtained by A.\,N. Skiba in   \cite{h4, h5} (for the soluble case, see also J.\,C. Beidleman  and H. Heineken \cite{h3}) and for the class of all quasi-$\mathfrak{F}$-groups, where $\mathfrak{F}$ is a hereditary saturated formation, was given  in \cite{ArX}. In particular, the intersection of maximal quasinilpotent subgroups is the quasinilpotent hypercenter. The aim of this paper is to give the answer on this  question for another wide class of solubly saturated formations.

\section{Results}

Recall that a group is called \emph{supersoluble} if every its chief factor is a simple abelian group.  V.\,A. Vedernikov \cite{j0} introduced the concept of $c$-supersoluble group ($SC$-group in the terminology  of D.\,J.\,S. Robinson \cite{j1}).
Recall that a group is called $c$-\emph{supersoluble}  if every its chief factor is a simple group. The products of $c$-supersoluble groups were studied in  \cite{j3, j9, j10, j11, j4}.

According to \cite{j8}, a group  $G$ is called  $Jc$-\emph{supersoluble} if every chief $J$-factor of $G$ is a simple group, where $J$ is a class of simple groups.  The products and properties of such groups were studied in \cite{j8, j6}.

Another generalization of the concept of $c$-supersoluble group was suggested in \cite{j5}.   A group $G$ is called $ca$-$\mathfrak{X}$-\emph{group} \cite{j5} if every abelian chief factor of $G$ is $\mathfrak{X}$-central and every non-abelian is a simple group. The class $\mathfrak{F}_{ca}$ of all $ca$-$\mathfrak{F}$-groups, where $\mathfrak{F}$ is a saturated formation, was studied in \cite{j5, j7}. Note that there exists formations $\mathfrak{F}\not\subseteq\mathfrak{F}_{ca}$.

\begin{example} Let $\mathfrak{F}$ be a class of groups all whose composition factors are either soluble or isomorphic to the alternating group $A_5$ of degree 5. It is clear that $\mathfrak{F}$ is a saturated formation. Let $G$ be the regular wreath product of $A_5$ and the cyclic group of order $7$. It is easy to see that $G\in\mathfrak{F}$ and $G\not\in \mathfrak{F}_{ca}$.
\end{example}

\begin{definition}\label{def1}
  We say that   $G$ is a $\mathfrak{J}cs$-$\mathfrak{X}$-\emph{group} if   every chief $\mathfrak{X}$-factor of $G$ is $\mathfrak{X}$-central and  other chief factors of $G$ are simple $\mathfrak{J}$-groups. We use  $\mathfrak{X}_{\mathfrak{J}cs}$ to denote the class of all $\mathfrak{J}cs$-$\mathfrak{X}$-groups.
\end{definition}

It is clear that if $\mathfrak{F}$ is a   solubly saturated  formation such that $\mathfrak{F}$ contains every composition factor of every $\mathfrak{F}$-group, then $\mathfrak{F}\subseteq\mathfrak{F}_{\mathfrak{J}cs}$.
Note that if $\mathfrak{J}$ is the class of all simple non-abelian groups and $\mathfrak{F}\subseteq\mathfrak{S}$, where $\mathfrak{S}$ is the class of all soluble groups, then $\mathfrak{F}_{\mathfrak{J}cs}=\mathfrak{F}_{ca}$.

\begin{theorem}\label{T1}
Let $\mathfrak{F}$ be a  solubly saturated $($normally hereditary, Fitting$)$ formation containing all nilpotent groups. Then $\mathfrak{F}_{\mathfrak{J}cs}$ is a  solubly saturated $($normally hereditary, Fitting$)$ formation.
\end{theorem}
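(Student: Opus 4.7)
\emph{Overall strategy.} My plan is to verify that $\mathfrak{F}_{\mathfrak{J}cs}$ is a formation first, and then to transfer each additional attribute—solubly saturated, normally hereditary, Fitting—from $\mathfrak{F}$ to $\mathfrak{F}_{\mathfrak{J}cs}$ separately. The guiding observation is that the two-clause defining condition decouples into an $\mathfrak{F}$-centrality part, which will inherit the relevant closure from $\mathfrak{F}$, and a simple-$\mathfrak{J}$ part on composition factors, which is stable under all the operations involved by an elementary combinatorial argument.

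\emph{Formation.} For $Q$-closure, I take $G\in\mathfrak{F}_{\mathfrak{J}cs}$ and $N\trianglelefteq G$, note that every chief factor $H/K$ of $G/N$ is a chief factor of $G$ with $N\le K$, and use the identification $C_{G/N}(H/K)=C_G(H/K)/N$ to transfer both clauses of the definition unchanged. For $R_0$-closure with $N_1\cap N_2=1$ and $G/N_i\in\mathfrak{F}_{\mathfrak{J}cs}$ for $i=1,2$, I invoke the classical fact that every chief factor of $G$ is $G$-isomorphic to a chief factor of some $G/N_i$, together with the observation that both defining properties are invariants of the $G$-isomorphism class.

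\emph{Soluble saturation.} I assume $N\trianglelefteq G$ is soluble, $N\le\Phi(G)$, and $G/N\in\mathfrak{F}_{\mathfrak{J}cs}$. Given a chief factor $H/K$ of $G$, the $G$-invariant subfactor $(H\cap N)/(K\cap N)$ of $H/K$ is either trivial or all of $H/K$. In the first case $HN/KN\cong H/K$ is a chief factor of $G/N$ and the hypothesis applies directly, both clauses again transferring via the $C$-centralizer identification. In the second case $H\le KN$ forces $H/K$ to be an abelian $p$-chief factor (being a quotient of a subfactor of the soluble $N$) which lies in $\Phi(G/K)$ (since $NK/K\le\Phi(G)K/K\le\Phi(G/K)$); the standard lemma that abelian Frattini chief factors are $\mathfrak{F}$-central for any solubly saturated $\mathfrak{F}\supseteq\mathfrak{N}$ then closes the case.

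\emph{Normal hereditariness, Fitting, and the main obstacle.} For normal hereditariness I refine an $M$-chief series of $M\trianglelefteq G\in\mathfrak{F}_{\mathfrak{J}cs}$ to a $G$-invariant series; each $M$-chief factor embeds into some $G$-chief factor $H/K$, and the normally hereditary property of $\mathfrak{F}$ applied to $(H/K)\rtimes G/C_G(H/K)$, combined with the automatic transfer of the $\mathfrak{J}$-simplicity clause at the level of composition factors, yields the required $M$-condition. The Fitting case is analogous for $M_1M_2$ with $M_1,M_2\trianglelefteq G$ both in $\mathfrak{F}_{\mathfrak{J}cs}$, again via a $G$-chief refinement and the Fitting property of $\mathfrak{F}$. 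I expect the principal obstacle to be precisely the Frattini-centrality lemma underlying the soluble-saturation step, where the hypothesis $\mathfrak{N}\subseteq\mathfrak{F}$ is used essentially; the remaining verifications are largely bookkeeping once that lemma is invoked.
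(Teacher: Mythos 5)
There is a genuine gap, and it sits exactly where you predicted the "principal obstacle" would be: the soluble-saturation step. Two things go wrong there. First, you verify the wrong closure condition: solubly saturated means $G\in\mathfrak{X}$ whenever $G/\Phi(N)\in\mathfrak{X}$ for some \emph{soluble normal} $N\trianglelefteq G$, whereas you assume only that $N$ is a soluble normal subgroup contained in $\Phi(G)$ with $G/N\in\mathfrak{F}_{\mathfrak{J}cs}$. That is a strictly stronger closure property, and it is \emph{false} for $\mathfrak{F}_{\mathfrak{J}cs}$ whenever $\mathfrak{J}\not\subseteq\mathfrak{F}$: take $S\in\mathfrak{J}\setminus\mathfrak{F}$, a faithful Frattini $\mathbb{F}_pS$-module $A$ and the Gasch\"utz Frattini extension $A\rightarrowtail R\twoheadrightarrow S$ with $A\simeq\Phi(R)$. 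Then $\Phi(R)$ is abelian normal, $R/\Phi(R)\simeq S\in\mathfrak{F}_{\mathfrak{J}cs}$, yet since $S$ is simple and acts faithfully on $A$ it acts faithfully on some chief factor $H/K$ of $R$ inside $\Phi(R)$, so $R/C_R(H/K)\simeq S\notin F(p)$ and $R\notin\mathfrak{F}_{\mathfrak{J}cs}$. (This is precisely the construction the paper itself uses to show $\mathfrak{F}_{\mathfrak{J}cs}$ is not saturated unless $\mathfrak{J}\subseteq\mathfrak{F}$.) Second, and relatedly, the ``standard lemma'' you invoke — that abelian Frattini chief factors are $\mathfrak{F}$-central for any solubly saturated $\mathfrak{F}\supseteq\mathfrak{N}$ — is not a theorem; the same $R$ refutes it already for $\mathfrak{F}=\mathfrak{N}$. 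The paper avoids all of this by never touching the Frattini-quotient condition directly: it exhibits the canonical composition definition $F_{\mathfrak{J}cs}$ with $F_{\mathfrak{J}cs}(0)=\mathfrak{F}_{\mathfrak{J}cs}$ and $F_{\mathfrak{J}cs}(p)=F(p)$, proves $\mathfrak{F}_{\mathfrak{J}cs}=CLF(F_{\mathfrak{J}cs})$ by a minimal-counterexample argument using $C^p(G)$ and $G_{\mathfrak{S}}$, and then cites the theorem that composition (Baer-local) formations are exactly the solubly saturated ones. Your argument needs to be replaced by something of this kind; the crucial point your case analysis misses is that $\Phi(N)$ for $N$ soluble normal is a much smaller subgroup than an arbitrary soluble normal piece of $\Phi(G)$.

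The other steps are closer to the mark but still underpowered. For normal hereditariness, ``the normally hereditary property of $\mathfrak{F}$ applied to $(H/K)\rtimes G/C_G(H/K)$'' does not by itself give $\mathfrak{F}$-centrality of an $N$-chief subfactor $H'/K'$ in $N$; the paper passes through the canonical definition, using Shemetkov's result that each $F(p)$ is normally hereditary to get $N/C_N(H/K)\in F(p)$ and then $N/C_N(H'/K')\in F(p)$ as a quotient. For the Fitting property, the product case $G=AB$ is \emph{not} analogous to a refinement argument: the essential difficulty is to show that a minimal normal subgroup $N$ of $G$ with $N\notin\mathfrak{F}$ is a \emph{simple} $\mathfrak{J}$-group, which the paper gets from the fact that $N$ is a direct product of simple $\mathfrak{J}$-groups normal in each of $A$ and $B$, so every subnormal subgroup of $N$ is normal in $A$ and in $B$, hence in $G=AB$. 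No chief-series refinement of $G$ produces this; it is a structural fact about direct products of non-abelian simple groups, and your sketch omits it entirely.
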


The main result of this paper is

\begin{theorem}\label{T3}
  Let $\mathfrak{F}$ be a hereditary saturated formation containing all nilpotent groups. Then the following statements are equivalent:

  $(1)$ $\mathrm{Z}_{\mathfrak{F}}(G)= \mathrm{Int}_{\mathfrak{F}}(G)$ holds for every group $G$ and $(\mathrm{Out}(G)\,|\,G\in\mathfrak{J})\subseteq\mathfrak{F}$.

  $(2)$ $\mathrm{Z}_{\mathfrak{F}_{\mathfrak{J}cs}}(G)= \mathrm{Int}_{\mathfrak{F}_{\mathfrak{J}cs}}(G)$ holds for every group $G$.
\end{theorem}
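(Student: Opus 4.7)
The plan is to establish the two implications separately, leaning on Skiba's characterization of the formations for which the hypercenter coincides with the intersection of maximal subgroups together with the structural properties of $\mathfrak{F}_{\mathfrak{J}cs}$ guaranteed by Theorem \ref{T1} (in particular, that $\mathfrak{F}_{\mathfrak{J}cs}$ is itself a solubly saturated formation and $\mathfrak{F}\subseteq\mathfrak{F}_{\mathfrak{J}cs}$).

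For the implication $(1)\Rightarrow(2)$, I would proceed by induction on $|G|$. The containment $\mathrm{Z}_{\mathfrak{F}_{\mathfrak{J}cs}}(G)\subseteq\mathrm{Int}_{\mathfrak{F}_{\mathfrak{J}cs}}(G)$ is standard (every $\mathfrak{F}_{\mathfrak{J}cs}$-central layer lies inside each $\mathfrak{F}_{\mathfrak{J}cs}$-maximal subgroup), so the work is to prove the reverse. Letting $D=\mathrm{Int}_{\mathfrak{F}_{\mathfrak{J}cs}}(G)$ and $N$ a minimal normal subgroup of $G$ below $D$, I would split into two cases. If $N$ is abelian (so an elementary abelian $p$-group) then $N$ can be $\mathfrak{F}_{\mathfrak{J}cs}$-central only by being $\mathfrak{F}$-central; compatibility of $\mathfrak{F}$-maximal and $\mathfrak{F}_{\mathfrak{J}cs}$-maximal subgroups near $N$ (both contain $N\cdot F$ with $F$ an $\mathfrak{F}$-maximal complement in their normalizer) lets me apply hypothesis (1) to $\mathfrak{F}$ and conclude $N\subseteq\mathrm{Z}_\mathfrak{F}(G)\subseteq\mathrm{Z}_{\mathfrak{F}_{\mathfrak{J}cs}}(G)$. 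If $N$ is non-abelian, then $N=S_1\times\cdots\times S_k$ with $S_i\in\mathfrak{J}$; the chief factor $N$ is then automatically in the $\mathfrak{F}_{\mathfrak{J}cs}$-hypercenter by the definition of $\mathfrak{F}_{\mathfrak{J}cs}$, and here the hypothesis $\mathrm{Out}(S)\in\mathfrak{F}$ for $S\in\mathfrak{J}$ is needed to ensure that the induced group $N\rtimes G/C_G(N)$ lies in $\mathfrak{F}_{\mathfrak{J}cs}$, which is what makes $N$ actually $\mathfrak{F}_{\mathfrak{J}cs}$-central (not merely below the hypercenter in a weaker sense). After $N\subseteq\mathrm{Z}_{\mathfrak{F}_{\mathfrak{J}cs}}(G)$ is established, pass to $G/N$ and use the induction hypothesis together with the fact that $\mathrm{Int}_{\mathfrak{F}_{\mathfrak{J}cs}}$ and $\mathrm{Z}_{\mathfrak{F}_{\mathfrak{J}cs}}$ are well-behaved under quotients (the quotient formulas for solubly saturated formations).

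For the implication $(2)\Rightarrow(1)$ I would argue contrapositively on each of the two assertions. To get $\mathrm{Z}_\mathfrak{F}(G)=\mathrm{Int}_\mathfrak{F}(G)$, assume a counterexample $G_0$ and observe that, since $\mathfrak{F}\subseteq\mathfrak{F}_{\mathfrak{J}cs}$, every $\mathfrak{F}_{\mathfrak{J}cs}$-maximal subgroup of $G_0$ meeting the soluble radical behaves like an $\mathfrak{F}$-maximal one there; by enlarging $G_0$ to kill off potential simple $\mathfrak{J}$-chief-factor interference (e.g., by taking a central product or direct product with a suitable group), I can transfer the counterexample into the $\mathfrak{F}_{\mathfrak{J}cs}$ world, contradicting (2). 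To obtain $\mathrm{Out}(S)\in\mathfrak{F}$ for each $S\in\mathfrak{J}$, suppose some $\mathrm{Out}(S)\notin\mathfrak{F}$ and consider $G=\mathrm{Aut}(S)$ (or an extension $S\rtimes T$ with $T\leq\mathrm{Aut}(S)/\mathrm{Inn}(S)$ chosen minimal with $T\notin\mathfrak{F}$); then $\mathrm{Inn}(S)$ is contained in every $\mathfrak{F}_{\mathfrak{J}cs}$-maximal subgroup of $G$ (each $M\cdot\mathrm{Inn}(S)$ that lies in $\mathfrak{F}_{\mathfrak{J}cs}$ forces $M\supseteq\mathrm{Inn}(S)$), so $\mathrm{Inn}(S)\subseteq\mathrm{Int}_{\mathfrak{F}_{\mathfrak{J}cs}}(G)$, while the failure $\mathrm{Out}(S)\notin\mathfrak{F}$ is arranged to produce an abelian section above $\mathrm{Inn}(S)$ whose associated semidirect product is not in $\mathfrak{F}$, preventing $\mathrm{Inn}(S)$ from lying in $\mathrm{Z}_{\mathfrak{F}_{\mathfrak{J}cs}}(G)$.

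The main obstacle I expect is the $(2)\Rightarrow(1)$ direction, specifically the construction in the second part: pinning down a witness group whose $\mathfrak{F}_{\mathfrak{J}cs}$-maximal subgroups can be enumerated well enough to force $\mathrm{Inn}(S)\subseteq\mathrm{Int}_{\mathfrak{F}_{\mathfrak{J}cs}}(G)$ while simultaneously proving $\mathrm{Inn}(S)\not\subseteq\mathrm{Z}_{\mathfrak{F}_{\mathfrak{J}cs}}(G)$. This requires careful bookkeeping of how non-$\mathfrak{F}$ sections of $\mathrm{Out}(S)$ obstruct $\mathfrak{F}_{\mathfrak{J}cs}$-centrality, and likely the choice of a \emph{minimal} non-$\mathfrak{F}$ section of $\mathrm{Out}(S)$ to keep the analysis tractable. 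The abelian case of $(1)\Rightarrow(2)$ may also require some delicate use of Skiba's theorem in the form that describes $\mathrm{Z}_\mathfrak{F}(G)$ via the boundary between $\mathfrak{F}$-central and $\mathfrak{F}$-eccentric chief factors.
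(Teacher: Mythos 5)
Your skeleton matches the paper's (split $(1)\Rightarrow(2)$ by the type of chief factor below $\mathrm{Int}_{\mathfrak{F}_{\mathfrak{J}cs}}(G)$; use $\mathrm{Aut}(S)$ to extract the condition on $\mathrm{Out}(S)$), but several of the main steps have genuine gaps. In the abelian case of $(1)\Rightarrow(2)$, your ``compatibility of $\mathfrak{F}$-maximal and $\mathfrak{F}_{\mathfrak{J}cs}$-maximal subgroups'' does not work: a minimal normal subgroup contained in every $\mathfrak{F}_{\mathfrak{J}cs}$-maximal subgroup need not lie in every $\mathfrak{F}$-maximal subgroup (an $\mathfrak{F}$-maximal subgroup is contained in \emph{some} $\mathfrak{F}_{\mathfrak{J}cs}$-maximal one, not conversely), so you cannot transfer the hypothesis $\mathrm{Int}_\mathfrak{F}=\mathrm{Z}_\mathfrak{F}$ this way. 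The paper instead proves (Theorem \ref{t3}(1)) that $M/C_M(H/K)\in F(p)$ for every $\mathfrak{F}_{\mathfrak{J}cs}$-maximal $M$, deduces that all $\mathfrak{F}$-subgroups of $G/C_G(H/K)$ lie in $F(p)$, and then forces $G/C_G(H/K)\in F(p)$ via the $s$-critical-subgroup criterion of \cite{h4} --- exactly the ``delicate use of Skiba's theorem'' you flagged but did not supply. In the non-abelian case you assert that $N=S_1\times\dots\times S_k$ with $S_i\in\mathfrak{J}$ is ``automatically'' $\mathfrak{F}_{\mathfrak{J}cs}$-central once $\mathrm{Out}(S_i)\in\mathfrak{F}$. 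This is false for $k\geq 2$: in $N\rtimes G/C_G(N)$ the copy of $N$ is a non-simple chief factor outside $\mathfrak{F}$, so the extension is not in $\mathfrak{F}_{\mathfrak{J}cs}$ by Definition \ref{def1}. The missing ingredient is Proposition \ref{p8}: since $\mathfrak{N}\subseteq\mathfrak{F}$, every element of $G$ lies in some $\mathfrak{F}_{\mathfrak{J}cs}$-maximal subgroup, and each such subgroup normalizes every $S_i$ by Lemma \ref{ls5}, whence each $S_i\trianglelefteq G$ and $k=1$. Your abelian/non-abelian dichotomy also omits non-abelian chief factors that are $\mathfrak{F}$-groups, which must be shown $\mathfrak{F}$-central and need not involve $\mathfrak{J}$ at all.

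In $(2)\Rightarrow(1)$, the reduction from $\mathfrak{F}_{\mathfrak{J}cs}$ to $\mathfrak{F}$ by ``enlarging $G_0$ to kill off interference'' is not an argument; the paper routes this through Theorem \ref{t3}(2), taking a minimal $s$-critical group and building faithful simple modules and Frattini extensions to reach a contradiction. By contrast, your treatment of the $\mathrm{Out}(S)$ condition is essentially the paper's: Proposition \ref{p9} gives $\mathrm{Inn}(S)\leq\mathrm{Int}_{\mathfrak{F}_{\mathfrak{J}cs}}(\mathrm{Aut}(S))$, hypothesis $(2)$ then forces $S\rtimes\mathrm{Aut}(S)\in\mathfrak{F}_{\mathfrak{J}cs}$, and the Schreier conjecture together with $\mathfrak{N}\subseteq\mathfrak{F}$ yields $\mathrm{Out}(S)\in\mathfrak{F}$; this is in fact the least problematic step, not the main obstacle you anticipated.
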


 Recall that $E\mathfrak{F}$ is the class of groups all whose composition factors are $\mathfrak{F}$-groups.
The structural description of $\mathfrak{J}cs$-$\mathfrak{F}$-group is given in:

\begin{theorem}\label{T2}
Let $\mathfrak{F}$ be a   solubly saturated  formation such that $\mathfrak{F}$ contains every composition factor of every $\mathfrak{F}$-group.   Then
the following statements are equivalent:

$(1)$ $G$ is a $\mathfrak{J}cs$-$\mathfrak{F}$-group.

$(2)$ $S/\mathrm{Z}_\mathfrak{F}(G)=\mathrm{Soc}(G/\mathrm{Z}_\mathfrak{F}(G))$   is a direct product of $G$-invariant  simple  $\mathfrak{J}$-groups   and $G/S$ is a soluble $\mathfrak{F}$-group.

$(3)$  $G^\mathfrak{F}=G^{E\mathfrak{F}}$,  $\mathrm{Z}(G^\mathfrak{F})\leq\mathrm{Z}_\mathfrak{F}(G)$ and $G^\mathfrak{F}/\mathrm{Z}(G^\mathfrak{F})$ is a direct product of $G$-invariant simple $\mathfrak{J}$-groups.
\end{theorem}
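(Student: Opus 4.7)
The plan is to prove the equivalence via the cyclic chain $(1)\Rightarrow(2)\Rightarrow(3)\Rightarrow(1)$, exploiting standard facts about the $\mathfrak{F}$-hypercenter, the generalized Fitting subgroup, and Schreier's conjecture.

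For $(1)\Rightarrow(2)$, I would set $Z=\mathrm{Z}_\mathfrak{F}(G)$ and invoke the standard identity $\mathrm{Z}_\mathfrak{F}(G/Z)=1$. Any minimal normal subgroup $N/Z$ of $G/Z$ is a chief factor of $G$; if it were $\mathfrak{F}$-central it would enlarge $Z$, a contradiction, so by the $\mathfrak{J}cs$-$\mathfrak{F}$ hypothesis $N/Z$ is a simple $\mathfrak{J}$-group. This yields that $S/Z=\mathrm{Soc}(G/Z)$ is a direct product of $G$-invariant simple $\mathfrak{J}$-groups $T_1,\dots,T_k$. For the structure of $G/S$, observe $F(G/Z)=1$ (no abelian minimal normal subgroup survives), hence $F^*(G/Z)=S/Z$ and $C_{G/Z}(F^*(G/Z))=Z(F^*(G/Z))=1$; therefore $G/S$ embeds into $\prod_i\mathrm{Out}(T_i)$, which is soluble by Schreier. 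All chief factors of the soluble group $G/S$ are then abelian, hence $\mathfrak{F}$-factors, and therefore $\mathfrak{F}$-central by the $\mathfrak{J}cs$-$\mathfrak{F}$ hypothesis, giving $G/S\in\mathfrak{F}$ via the characterization of solubly saturated formations by their central chief factors.

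For $(2)\Rightarrow(3)$, the first step is to identify $G^\mathfrak{F}$. Since $G/S\in\mathfrak{F}$, $G^\mathfrak{F}\le S$; by inspecting chief factors, one checks that for $N\trianglelefteq G$ with $N\le S$, the quotient $G/N$ lies in $\mathfrak{F}$ iff $NZ=S$. Hence $G^\mathfrak{F}\cdot Z=S$ and $G^\mathfrak{F}/(G^\mathfrak{F}\cap Z)\cong S/Z$ is a direct product of $G$-invariant simple $\mathfrak{J}$-groups. To verify $\mathrm{Z}(G^\mathfrak{F})=G^\mathfrak{F}\cap Z$: centrality of any element of $G^\mathfrak{F}$ forces its image in the semisimple quotient $S/Z$ to be trivial, so it lies in $G^\mathfrak{F}\cap Z$; conversely $G^\mathfrak{F}\cap Z$ is centralized by $G^\mathfrak{F}$ because it acts trivially on each simple quotient $T_i$ (which is self-centralizing) and the hypercentral structure of $Z$ rules out further commutation. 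Thus $\mathrm{Z}(G^\mathfrak{F})\le Z=\mathrm{Z}_\mathfrak{F}(G)$. For $G^\mathfrak{F}=G^{E\mathfrak{F}}$: the inclusion $G^{E\mathfrak{F}}\le G^\mathfrak{F}$ is immediate from composition-factor closure of $\mathfrak{F}$, while the reverse inclusion uses that the composition factors of $G^\mathfrak{F}$ arising from $S/Z$ are precisely the non-$\mathfrak{F}$-central simple $\mathfrak{J}$-groups, which cannot lie in $\mathfrak{F}$ without contradicting the triviality of $\mathrm{Z}_\mathfrak{F}(G/Z)$.

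For $(3)\Rightarrow(1)$, I would partition the chief factors of $G$ into three ranges. Chief factors above $G^\mathfrak{F}$ belong to $G/G^\mathfrak{F}\in\mathfrak{F}$, hence are $\mathfrak{F}$-central. Chief factors within $\mathrm{Z}(G^\mathfrak{F})\le\mathrm{Z}_\mathfrak{F}(G)$ are $\mathfrak{F}$-central by definition of the hypercenter. Chief factors of $G$ lying between $\mathrm{Z}(G^\mathfrak{F})$ and $G^\mathfrak{F}$ correspond to chief factors of the direct product of $G$-invariant simple $\mathfrak{J}$-groups $G^\mathfrak{F}/\mathrm{Z}(G^\mathfrak{F})$, and these are themselves simple $\mathfrak{J}$-groups as direct factors of a semisimple $G$-module; being non-$\mathfrak{F}$-central they cannot belong to $\mathfrak{F}$ (exactly the content of $G^\mathfrak{F}=G^{E\mathfrak{F}}$), so they fall into the ``other'' case of Definition~\ref{def1}. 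Putting the three ranges together verifies that $G$ is a $\mathfrak{J}cs$-$\mathfrak{F}$-group.

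The main obstacle I anticipate is the step $(2)\Rightarrow(3)$, specifically pinning down $\mathrm{Z}(G^\mathfrak{F})$ and matching the two residuals. The equality $G^\mathfrak{F}=G^{E\mathfrak{F}}$ is the delicate assertion that the simple $\mathfrak{J}$-groups occurring as chief factors in the socle of $G/Z$ are outside $\mathfrak{F}$, which requires combining the triviality of $\mathrm{Z}_\mathfrak{F}(G/Z)$ with the composition-factor closure hypothesis rather than following from formal formation-theoretic manipulations alone.
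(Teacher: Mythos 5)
Your route is the cyclic chain $(1)\Rightarrow(2)\Rightarrow(3)\Rightarrow(1)$, whereas the paper proves $(1)\Rightarrow(2)$, then $(1)\Rightarrow(3)$ (using both the already-established statement $(2)$ and the hypothesis $(1)$ itself), and finally gets back from $(2)$ and from $(3)$ to $(1)$ by a Jordan--H\"{o}lder argument. Your $(1)\Rightarrow(2)$ and $(3)\Rightarrow(1)$ are essentially the paper's arguments (the paper uses $\tilde{\mathrm{F}}(G/Z)$ via $\Phi(G/Z)=1$ where you use $F^{*}(G/Z)$; and in $(3)\Rightarrow(1)$ your observation that $G^{\mathfrak{F}}=G^{E\mathfrak{F}}$ forces the simple factors of $G^{\mathfrak{F}}/\mathrm{Z}(G^{\mathfrak{F}})$ to lie outside $\mathfrak{F}$ is correct). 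The genuine gap is in $(2)\Rightarrow(3)$, exactly where you anticipated trouble: you claim the simple $\mathfrak{J}$-groups $T_i$ in $\mathrm{Soc}(G/\mathrm{Z}_{\mathfrak{F}}(G))$ cannot lie in $\mathfrak{F}$ because $\mathrm{Z}_{\mathfrak{F}}(G/Z)=1$. But that triviality only says the $T_i$ are not $\mathfrak{F}$-\emph{central}, and for a non-abelian chief factor $H/K$ being $\mathfrak{F}$-central means $G/C_G(H/K)\in\mathfrak{F}$ (Lemma~\ref{l1}), which is entirely compatible with $H/K\in\mathfrak{F}$. Concretely, let $\mathfrak{F}=\mathfrak{N}^{*}$ be the formation of quasinilpotent groups (solubly saturated and containing every composition factor of each of its members), let $A_5\in\mathfrak{J}$, and let $G=S_5$: then $\mathrm{Z}_{\mathfrak{F}}(G)=1$, $S=A_5$ and $G/S\simeq C_2$, so $(2)$ holds, yet $G^{\mathfrak{F}}=A_5\neq 1=G^{E\mathfrak{F}}$, so $(3)$ fails. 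Hence the implication $(2)\Rightarrow(3)$ cannot be established by any argument under the stated hypotheses; the composition-factor-closure assumption in fact pushes the wrong way, since it tends to place the $T_i$ \emph{inside} $\mathfrak{F}$. The paper never needs this step because it derives $(3)$ from $(1)$, where ``chief factors outside $\mathfrak{F}$ are simple $\mathfrak{J}$-groups'' is part of Definition~\ref{def1}. (The same subtlety quietly affects the paper's own ``$(2)\Rightarrow(1)$'' step, so your difficulty points at a real soft spot in the theorem as stated, not merely at your write-up.)

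A secondary, reparable issue: in $(2)\Rightarrow(3)$ your argument that $G^{\mathfrak{F}}\cap\mathrm{Z}_{\mathfrak{F}}(G)\leq\mathrm{Z}(G^{\mathfrak{F}})$ (``the hypercentral structure of $Z$ rules out further commutation'') is not a proof. The correct tool is the theorem that $G^{\mathfrak{F}}\leq C_G(\mathrm{Z}_{\mathfrak{F}}(G))$, which the paper imports as \cite[Corollary 2.3.1]{z}; with that citation this part of your argument closes.
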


\begin{corollary}[{\cite[Proposition 2.4]{j1}}]
  A group $G$ is a $c$-supersoluble  if and only if there is a perfect
normal subgroup $D$ such that $G/D$ is supersoluble, $D/\mathrm{Z}(D)$ is a direct product of
$G$-invariant simple groups, and $\mathrm{Z}(D)$ is supersolubly embedded in $G$.
\end{corollary}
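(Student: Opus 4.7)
The plan is to derive this corollary by specializing Theorem~\ref{T2} to $\mathfrak{F}=\mathfrak{U}$ (the formation of all supersoluble groups) and $\mathfrak{J}$ the class of all non-abelian simple groups. This formation is hereditary saturated, hence solubly saturated, and every composition factor of a supersoluble group is cyclic of prime order, so $\mathfrak{U}$ contains every composition factor of every $\mathfrak{U}$-group; thus the hypotheses of Theorem~\ref{T2} are met.

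First I would identify $c$-supersoluble groups with $\mathfrak{U}_{\mathfrak{J}cs}$. A chief factor $H/K$ of $G$ is either elementary abelian or non-abelian. If it is abelian and lies in $\mathfrak{U}$, then $|H/K|=p$, and $(H/K)\rtimes G/C_G(H/K)$ is a cyclic extension of a cyclic group of prime order, hence automatically supersoluble, so such factors are always $\mathfrak{U}$-central. The non-$\mathfrak{U}$ chief factors are exactly the non-abelian ones. So $G\in\mathfrak{U}_{\mathfrak{J}cs}$ means that every abelian chief factor of $G$ has prime order and every non-abelian chief factor is simple, which is precisely the definition of $c$-supersolubility.

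Now apply the equivalence $(1)\Leftrightarrow(3)$ of Theorem~\ref{T2}, taking $D:=G^\mathfrak{U}$. Since composition factors of supersoluble groups are cyclic of prime order, $E\mathfrak{U}=\mathfrak{S}$, the class of soluble groups, so $G^{E\mathfrak{U}}=G^{\mathfrak{S}}$, and the soluble residual is always perfect; the condition $G^\mathfrak{U}=G^{E\mathfrak{U}}$ is therefore equivalent to $D$ being perfect. The quotient $G/D$ is supersoluble by definition of the $\mathfrak{U}$-residual, and $D/\mathrm{Z}(D)$ being a direct product of $G$-invariant simple groups is exactly the last clause of Theorem~\ref{T2}(3). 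Finally, $Z(D)$ being \emph{supersolubly embedded} in $G$ means every chief factor of $G$ below $\mathrm{Z}(D)$ is cyclic of prime order, i.e.\ $\mathfrak{U}$-central; this is exactly $\mathrm{Z}(D)\leq\mathrm{Z}_\mathfrak{U}(G)$.

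For the converse translation I would check that a perfect normal subgroup $D$ with $G/D$ supersoluble is automatically the $\mathfrak{U}$-residual: $G^{\mathfrak{U}}\leq D$ since $G/D\in\mathfrak{U}$, whereas $D$ perfect and $G/D$ soluble forces $D=G^{\mathfrak{S}}$, and $\mathfrak{U}\subseteq\mathfrak{S}$ gives $G^{\mathfrak{S}}\leq G^{\mathfrak{U}}$, so $D=G^{\mathfrak{U}}=G^{E\mathfrak{U}}$. The remaining two conditions then transport verbatim to the hypotheses of Theorem~\ref{T2}(3). No genuine obstacle is anticipated — the work is essentially a dictionary translation between the intrinsic formulation of Robinson and the residual/hypercentre language of Theorem~\ref{T2}; the only point requiring a moment's care is the identification of perfectness of $D$ with the equality $G^{\mathfrak{U}}=G^{E\mathfrak{U}}$ via $E\mathfrak{U}=\mathfrak{S}$.
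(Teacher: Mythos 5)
Your overall route is exactly the intended one: the paper states this corollary without a printed proof, as the specialization of Theorem~\ref{T2} to $\mathfrak{F}=\mathfrak{U}$ and $\mathfrak{J}$ the class of all non-abelian simple groups, and your dictionary ($E\mathfrak{U}=\mathfrak{S}$; perfectness of $D$ equivalent to $G^{\mathfrak{U}}=G^{\mathfrak{S}}=G^{E\mathfrak{U}}$; supersoluble embedding of $\mathrm{Z}(D)$ equivalent to $\mathrm{Z}(D)\leq\mathrm{Z}_{\mathfrak{U}}(G)$) is correct, as is the converse check that a perfect normal $D$ with $G/D$ supersoluble must equal $G^{\mathfrak{U}}$.

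There is, however, one genuinely false assertion in your identification of $c$-supersolubility with membership in $\mathfrak{U}_{\mathfrak{J}cs}$. Every abelian chief factor lies in $\mathfrak{U}$ (it is elementary abelian, hence nilpotent, hence supersoluble), so the hypothesis ``abelian and lies in $\mathfrak{U}$'' does not force $|H/K|=p$, and abelian chief factors are certainly not ``always $\mathfrak{U}$-central'': if they were, Lemma~\ref{lz} would make every soluble group supersoluble, whereas $A_4$ has a chief factor of order $4$ that is not $\mathfrak{U}$-central. The statement you actually need, and which is true, is the equivalence: an abelian chief factor $H/K$ is $\mathfrak{U}$-central if and only if $|H/K|=p$. (Forward: $H/K$ is a chief factor of the supersoluble group $(H/K)\rtimes G/C_G(H/K)$, so it is cyclic of prime order. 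Backward: $G/C_G(H/K)$ embeds in $\mathrm{Aut}(C_p)\cong C_{p-1}$, so the semidirect product is metacyclic, hence supersoluble.) With this correction, ``every chief $\mathfrak{U}$-factor is $\mathfrak{U}$-central'' becomes ``every abelian chief factor has prime order,'' the non-$\mathfrak{U}$ chief factors are exactly the non-abelian ones, and the identification of $\mathfrak{U}_{\mathfrak{J}cs}$ with the class of $c$-supersoluble groups follows; the same equivalence is what justifies your reading of ``supersolubly embedded'' as $\mathrm{Z}(D)\leq\mathrm{Z}_{\mathfrak{U}}(G)$. One further small point worth recording: Theorem~\ref{T2}(3) speaks of simple $\mathfrak{J}$-groups (non-abelian), while Robinson's statement says only ``simple groups''; these agree because $D$ is perfect, so $D/\mathrm{Z}(D)$ is perfect and admits no non-trivial abelian direct factor.
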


\begin{corollary}[{\cite[Theorem B]{j4}}]
  A group $G$ is widely $c$-supersoluble if and only if   $G^{w\mathfrak{U}}=G^\mathfrak{S}$, $\mathrm{Z}(G^{w\mathfrak{U}})\leq\mathrm{Z}_\mathfrak{F}(G)$ and $G^{w\mathfrak{U}}/\mathrm{Z}(G^{w\mathfrak{U}})$ is a direct product of $G$-invariant simple non-abelian groups.
\end{corollary}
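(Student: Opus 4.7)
The plan is to derive this corollary as an immediate specialization of Theorem~\ref{T2}. By definition, widely $c$-supersoluble groups are precisely the $\mathfrak{J}cs$-$w\mathfrak{U}$-groups, where $w\mathfrak{U}$ denotes the formation of widely supersoluble groups and $\mathfrak{J}$ is the class of all non-abelian simple groups. So the whole argument reduces to checking that $w\mathfrak{U}$ satisfies the hypotheses of Theorem~\ref{T2} and then translating the three conclusions in part~(3) into the language of the corollary.

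First, I would record that $w\mathfrak{U}$ is a saturated (hence solubly saturated) formation containing $\mathfrak{N}$, and that every $w\mathfrak{U}$-group is soluble with all composition factors cyclic of prime order; in particular such factors again lie in $w\mathfrak{U}$, so the extra hypothesis ``$\mathfrak{F}$ contains every composition factor of every $\mathfrak{F}$-group'' of Theorem~\ref{T2} is verified. Consequently Theorem~\ref{T2} is applicable with $\mathfrak{F}:=w\mathfrak{U}$.

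Next I would unwind the three conditions in Theorem~\ref{T2}(3). Since $E(w\mathfrak{U})$ is exactly the class $\mathfrak{S}$ of all soluble groups (as the composition factors of $w\mathfrak{U}$-groups are the cyclic groups of prime order), the equality $G^{\mathfrak{F}}=G^{E\mathfrak{F}}$ becomes $G^{w\mathfrak{U}}=G^{\mathfrak{S}}$. The inclusion $\mathrm{Z}(G^{\mathfrak{F}})\leq \mathrm{Z}_{\mathfrak{F}}(G)$ becomes the stated hypercenter inclusion (understanding $\mathfrak{F}=w\mathfrak{U}$ there), and the quotient $G^{\mathfrak{F}}/\mathrm{Z}(G^{\mathfrak{F}})$ being a direct product of $G$-invariant simple $\mathfrak{J}$-groups is exactly what is asserted in the corollary.

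There is no real obstacle here: the work has all been done inside Theorem~\ref{T2}, and the only thing to verify is that $w\mathfrak{U}$ meets its hypotheses and that $E(w\mathfrak{U})=\mathfrak{S}$. The main subtlety is a notational one, namely recognising that ``widely $c$-supersoluble'' is the $\mathfrak{J}cs$-construction applied to $w\mathfrak{U}$, after which the corollary is just a rewording of part~(3) of Theorem~\ref{T2}.
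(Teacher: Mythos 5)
Your proposal is correct and matches the paper's (implicit) derivation: the corollary is stated as an immediate consequence of Theorem~\ref{T2}, obtained by taking $\mathfrak{F}=w\mathfrak{U}$ and $\mathfrak{J}$ the class of all non-abelian simple groups, exactly as you do. Your verification that $w\mathfrak{U}$ satisfies the hypotheses and that $E(w\mathfrak{U})=\mathfrak{S}$ (so that $G^{E\mathfrak{F}}=G^{\mathfrak{S}}$), together with the observation that the $\mathrm{Z}_{\mathfrak{F}}(G)$ in the statement is to be read as $\mathrm{Z}_{w\mathfrak{U}}(G)$, is all that is needed.
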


 \begin{corollary}[{\cite[Theorem A]{j7}}]
   Let $\mathfrak{F}$ be a saturated formation of soluble groups. Then $G\in\mathfrak{F}_{ca}$ if and only if $G^\mathfrak{F}=G^\mathfrak{S}$, $\mathrm{Z}(G^\mathfrak{F})\leq\mathrm{Z}_\mathfrak{F}(G)$ and $G^\mathfrak{F}/\mathrm{Z}(G^\mathfrak{F})$ is a direct product of $G$-invariant simple non-abelian groups.
 \end{corollary}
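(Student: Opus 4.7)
The plan is to prove the cycle $(1)\Rightarrow(2)\Rightarrow(3)\Rightarrow(1)$, with $Z:=\mathrm{Z}_\mathfrak{F}(G)$ throughout and exploiting the standard equality $\mathrm{Z}_\mathfrak{F}(G/Z)=1$. For $(1)\Rightarrow(2)$ the first observation is that in a $\mathfrak{J}cs$-$\mathfrak{F}$-group a chief factor belongs to $\mathfrak{F}$ if and only if it is $\mathfrak{F}$-central: the direction $\mathfrak{F}$-central $\Rightarrow$ ``in $\mathfrak{F}$'' is immediate from quotient-closure of formations, the reverse is precisely Definition~\ref{def1}. Combined with $\mathrm{Z}_\mathfrak{F}(G/Z)=1$ this forces every minimal normal subgroup $N/Z$ of $G/Z$ to lie outside $\mathfrak{F}$ and hence, by Definition~\ref{def1}, to be a simple $\mathfrak{J}$-group; being simple, it is automatically $G$-invariant. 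So $S/Z=\mathrm{Soc}(G/Z)$ is a direct product of $G$-invariant simple $\mathfrak{J}$-groups $T_i$. A short centraliser argument then gives $C_{G/Z}(S/Z)=1$, whence $G/Z$ embeds into $\prod_i\mathrm{Aut}(T_i)$ without permuting the factors and $G/S$ embeds into $\prod_i\mathrm{Out}(T_i)$, which is soluble by Schreier's conjecture. Each chief factor of $G/S$ is then abelian, hence in $\mathfrak{N}\subseteq\mathfrak{F}$, hence $\mathfrak{F}$-central in $G$ by the $\mathfrak{J}cs$-$\mathfrak{F}$ hypothesis, and---because $S\le C_G(H/K)$ for every such factor---equally $\mathfrak{F}$-central in $G/S$; saturation then delivers $G/S\in\mathfrak{F}$.

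For $(2)\Rightarrow(1)$ I refine a chief series of $G$ through $Z\le S\le G$: by Jordan-H\"older every chief factor of $G$ is $G$-isomorphic either to a section below $Z$ (hence $\mathfrak{F}$-central by definition of $Z$), to one of the $T_i$ strictly between $Z$ and $S$, or to an abelian chief factor of $G/S\in\mathfrak{F}$ (hence $\mathfrak{F}$-central by saturation). The only point of Definition~\ref{def1} that still requires work is the case when some $T_i$ itself lies in $\mathfrak{F}$: one must then check that $T_i\rtimes G/C_G(T_i)\in\mathfrak{F}$. I would verify this by combining the composition-factor-closure hypothesis on $\mathfrak{F}$ with the fact that $G/T_iC_G(T_i)$ is a quotient of the soluble $\mathfrak{F}$-group $G/S$, Schreier's solubility of $\mathrm{Out}(T_i)$, and the canonical Baer-local description of the solubly saturated formation $\mathfrak{F}$.

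For $(2)\Leftrightarrow(3)$: under (2) the inclusion $G^\mathfrak{F}\le S$ follows from $G/S\in\mathfrak{F}$, and since each $T_i$ is perfect and non-$\mathfrak{F}$-central the $\mathfrak{F}$-residual $G^\mathfrak{F}$ must project onto $S/Z$, giving $\mathrm{Z}(G^\mathfrak{F})=G^\mathfrak{F}\cap Z\le\mathrm{Z}_\mathfrak{F}(G)$ and $G^\mathfrak{F}/\mathrm{Z}(G^\mathfrak{F})\cong S/Z$. The equality $G^\mathfrak{F}=G^{E\mathfrak{F}}$ is then equivalent to all composition factors of $G/G^\mathfrak{F}\in\mathfrak{F}$ belonging to $\mathfrak{F}$, which is immediate from the composition-factor hypothesis. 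Conversely, under (3) the subgroup $S$ is reconstructed as the preimage in $G$ of $\mathrm{Soc}(G/Z)$ and the previous calculation is reversed.

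The main obstacle I expect is the $\mathfrak{F}$-centrality verification for those $T_i$ that happen to lie in $\mathfrak{F}$ during $(2)\Rightarrow(1)$: it genuinely needs the interplay between the composition-factor closure, Schreier's conjecture, and the Baer-local description of $\mathfrak{F}$. The remaining steps are routine bookkeeping with hypercentres, socles, and $\mathfrak{F}$-residuals.
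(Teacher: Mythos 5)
Your proposal is correct and takes essentially the same route as the paper: the corollary has no standalone proof there but is obtained by specializing Theorem~\ref{T2} (with $\mathfrak{J}$ the class of all non-abelian simple groups, so that $\mathfrak{F}_{\mathfrak{J}cs}=\mathfrak{F}_{ca}$ and $E\mathfrak{F}=\mathfrak{S}$), and your cycle $(1)\Rightarrow(2)\Rightarrow(3)\Rightarrow(1)$ reproduces that theorem's argument — socle modulo $\mathrm{Z}_\mathfrak{F}(G)$, the embedding of $G/\mathrm{Z}_\mathfrak{F}(G)$ into $\mathrm{Aut}(T_1)\times\dots\times\mathrm{Aut}(T_n)$, Schreier's conjecture for $G/S$, Jordan--H\"{o}lder for the converse, and the residual bookkeeping via $G^\mathfrak{F}\leq C_G(\mathrm{Z}_\mathfrak{F}(G))$. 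Two small remarks: the obstacle you flag (verifying $\mathfrak{F}$-centrality of a $T_i$ that happens to lie in $\mathfrak{F}$) is vacuous for this corollary, since $\mathfrak{F}\subseteq\mathfrak{S}$ contains no non-abelian simple group; and the implication ``$\mathfrak{F}$-central $\Rightarrow$ lies in $\mathfrak{F}$'' is not a consequence of quotient-closure ($H/K$ is a normal subgroup, not a quotient, of $(H/K)\rtimes G/C_G(H/K)$) but of the composition-factor hypothesis on $\mathfrak{F}$ — here simply of $\mathfrak{F}\subseteq\mathfrak{S}$, which forces such a factor to be elementary abelian.
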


\section{Preliminaries}

The notation and terminology agree with the books \cite{s8, s5}. We refer the reader to these
books for the results on formations. Recall that $G^\mathfrak{F}$ is the $\mathfrak{F}$-residual of $G$ for a formation $\mathfrak{F}$,  $G_\mathfrak{S}$ is the soluble radical of  $G$, $\tilde{\mathrm{F}}(G)$ is defined by $\tilde{\mathrm{F}}(G)/\Phi(G)=\mathrm{Soc}(G/\Phi(G))$; $\pi(G)$ is the set of all prime divisors of  $G$,  $\pi(\mathfrak{X})=\underset{G\in\mathfrak{X}}\cup\pi(G)$, $\mathfrak{N}_p\mathfrak{F}=(G\,|\, G/\mathrm{O}_p(G)\in \mathfrak{F})$ is a formation for a formation $\mathfrak{F}$, $G$ is called $s$-critical for $\mathfrak{X}$ if all proper subgroups of $G$ are $\mathfrak{X}$-groups and  $G\not\in\mathfrak{X}$, $\mathrm{Aut}G$, $\mathrm{Inn}G$ and $\mathrm{Out}G$ are respectively groups of all, inner and outer automorphisms of $G$.

A \emph{formation} is a class $\mathfrak{X}$ of groups with the following properties:
$(a)$ every homomorphic image of an $\mathfrak{X}$-group is an $\mathfrak{X}$-group, and
$(b)$ if $G / M$ and $G / N$ are $\mathfrak{X}$-groups, then also $G/(M\cap N)\in \mathfrak{X}$.
   A formation $\mathfrak{X}$ is said to be: \emph{saturated} (resp. \emph{solubly saturated}) if $G\in\mathfrak{X}$
whenever $G/\Phi(N)\in\mathfrak{X}$ for some normal (resp. for some soluble normal) subgroup $N$ of $G$; \emph{hereditary} (resp. \emph{normally hereditary}) if $H\in \mathfrak{X}$ whenever $H\leq G\in \mathfrak{X}$ (resp. whenever $H\triangleleft  G\in \mathfrak{X}$); \emph{Fitting} if it is normally hereditary and $G\in\mathfrak{X}$ wherever $G=AB$, $A, B\trianglelefteq G$ and $A, B\in\mathfrak{X}$.

Recall that $C^p(G)$ is the intersection of the centralizers of all abelian $p$-chief factors of   $G$ ($C^p(G)=G$ if $G$ has no such chief factors). Let $f$ be a function of the form $f: \mathbb{P}\cup\{0\}\rightarrow\{formations\}$. Recall \cite[p. 4]{s5} that $CLF(f)=(G\,|\, G/G_\mathfrak{S}\in f(0)$ and $G/C^p(G)\in f(p)$ for all $p\in\pi(G)$ such that $G$ has an abelian $p$-chief factor). If a formation $\mathfrak{F}=CLF(f)$ for some $f$, then $\mathfrak{F}$ is called \emph{composition} or \emph{Baer-local}. A formation is solubly saturated  if and only if it is composition (Baer-local) \cite[IV, 4.17]{s8}.
 Any nonempty solubly saturated formation $\mathfrak{F}$ has an unique composition definition $F$
such that $F(p)=\mathfrak{N}_pF(p)\subseteq\mathfrak{F}$ for all primes $p$ and  $F(0) = \mathfrak{F}$ (see \cite[1, 1.6]{s5}). In this case $F$ is called the \emph{canonical composition definition}  of $\mathfrak{F}$.

\begin{lemma}[{\cite[1, 2.6]{s5}}]\label{lz} Let $\mathfrak{F}$ be a solubly saturated formation. Then
 $G\in\mathfrak{F}$ if and only if $G=\mathrm{Z}_\mathfrak{F}(G)$.
\end{lemma}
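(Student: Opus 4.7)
The plan is to work through the canonical composition definition $F$ of $\mathfrak{F}$ introduced just before the lemma: $F(0)=\mathfrak{F}$, $F(p)=\mathfrak{N}_pF(p)\subseteq\mathfrak{F}$ for every prime $p$, and by definition of $CLF(F)$ we have $G\in\mathfrak{F}$ if and only if $G/G_\mathfrak{S}\in F(0)$ and $G/C^p(G)\in F(p)$ for every prime $p$ such that $G$ has an abelian $p$-chief factor.

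For the forward implication, I take $G\in\mathfrak{F}$ and an arbitrary chief factor $H/K$ of $G$, aiming to show that $R=(H/K)\rtimes G/C_G(H/K)$ lies in $\mathfrak{F}$. If $H/K$ is an elementary abelian $p$-group, then $C^p(G)\leq C_G(H/K)$, so $G/C_G(H/K)$ is a quotient of $G/C^p(G)\in F(p)$ and hence lies in $F(p)$. Since $H/K$ is a normal $p$-subgroup of $R$ and $R/(H/K)\in F(p)=\mathfrak{N}_pF(p)$, it follows that $R\in F(p)\subseteq\mathfrak{F}$. If $H/K$ is non-abelian, I would verify the $CLF(F)$ criteria directly for $R$: since $H/K$ is semisimple and self-centralizing in $R$, the soluble radical of $R$ equals that of $G/C_G(H/K)$, and every abelian $q$-chief factor of $R$ corresponds to an abelian $q$-chief factor of $G/C_G(H/K)$; the conditions for $R\in\mathfrak{F}$ then reduce to the conditions already known for the quotient $G/C_G(H/K)\in\mathfrak{F}$.

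For the backward implication, I induct on $|G|$. Choose a minimal normal subgroup $N$; all chief factors of $G/N$ are chief factors of $G$ above $N$ and hence $\mathfrak{F}$-central, so $G/N=\mathrm{Z}_\mathfrak{F}(G/N)$ and by induction $G/N\in\mathfrak{F}$. To conclude $G\in\mathfrak{F}$, I check the $CLF(F)$ criteria. For $G/G_\mathfrak{S}\in F(0)=\mathfrak{F}$, observe that every chief factor of $G/G_\mathfrak{S}$ is non-abelian and $\mathfrak{F}$-central, so one can apply the induction or the closure properties of $\mathfrak{F}$ on the chain through $G_\mathfrak{S}$. For each prime $p$ admitting an abelian $p$-chief factor, the hypothesis provides $G/C_G(H/K)\in F(p)$ for every such $H/K$; intersecting over all these centralizers gives $G/C^p(G)\in F(p)$ because $F(p)$ is a formation and $C^p(G)$ is by definition the intersection of the centralizers of the abelian $p$-chief factors.

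The main obstacle will be the non-abelian case of the forward direction: showing that the twisted product $R=(H/K)\rtimes G/C_G(H/K)$ inherits membership in $\mathfrak{F}$ from $G$. The delicate point is tracking the soluble radical and the abelian $p$-chief factors of $R$ against those of $G/C_G(H/K)$; the fact that a non-abelian chief factor is a direct product of isomorphic non-abelian simple groups, contributes nothing to the soluble radical of $R$ and creates no new abelian chief factors, is exactly what makes the reduction go through and forces $R\in\mathfrak{F}$ via the canonical composition definition.
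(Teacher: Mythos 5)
The paper offers no proof of this lemma (it is quoted from Guo's book), so your argument must stand on its own, and it does not: the non-abelian case of the forward direction fails. Write $N=H/K$, $Q=G/C_G(H/K)$ and $R=N\rtimes Q$. Your claim that $N$ is self-centralizing in $R$ is false: since $Q$ acts faithfully and $\mathrm{Z}(N)=1$, the centralizer $D=C_R(N)$ is the anti-diagonal copy $\{(n^{-1},\bar n)\,:\,n\in N\}$ of $N$, which meets $N$ trivially, so that $ND\cong N\times N$ (the paper itself uses exactly this fact, $\mathrm{Soc}(R)\cong H/K\times H/K$, in step $(b)$ of the proof of Theorem \ref{T3}). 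Worse, the plan of verifying the $CLF(F)$ criteria for $R$ is circular at the decisive point: one checks that $R_\mathfrak{S}=1$ (a soluble normal subgroup of $R$ meets the semisimple group $ND$ trivially, hence centralizes $N$ and lies in $D\cong N$, which has trivial soluble radical), so the condition $R/R_\mathfrak{S}\in F(0)=\mathfrak{F}$ from the canonical composition definition is literally the statement $R\in\mathfrak{F}$ you are trying to prove; it does not reduce to any condition on the smaller group $Q$. The correct argument bypasses $CLF(F)$ entirely: $N$ and $D$ are normal in $R$ with $N\cap D=1$, and $R/N\cong Q\cong R/D$ (the conjugation homomorphism $R\to\mathrm{Aut}(N)$ has kernel $D$ and image equal to the image of $Q$, since the image of $N$ is $\mathrm{Inn}(N)$, already induced by $HC_G(H/K)/C_G(H/K)\leq Q$); hence $R$ is a subdirect product of two copies of $Q\in\mathfrak{F}$ and the formation property gives $R\in\mathfrak{F}$.

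The same circularity infects your backward direction. When $G_\mathfrak{S}=1$ (which happens precisely when the group you reduce to has a non-abelian minimal normal subgroup), the condition $G/G_\mathfrak{S}\in F(0)$ is again $G\in\mathfrak{F}$ itself, and no induction on a chain through $G_\mathfrak{S}$ is available for the improper quotient $G/1$; moreover your assertion that every chief factor of $G/G_\mathfrak{S}$ is non-abelian is wrong (consider $A_5\wr C_2$). The repair is the same trick: reduce to a unique minimal normal subgroup $N$ (otherwise $G$ is a subdirect product of two proper quotients handled by induction); if $N$ is non-abelian then $C_G(N)=1$, so $G\cong G/C_G(N)$ is a quotient of the $\mathfrak{F}$-group $N\rtimes G/C_G(N)$ and you are done, while if $N$ is abelian then $G_\mathfrak{S}\neq 1$ and your inductive treatment of $G/G_\mathfrak{S}$ together with your $C^p$-computation (which is essentially Lemma \ref{l1} and is fine, modulo spelling out why $(H/K)\rtimes G/C_G(H/K)\in\mathfrak{F}$ forces $G/C_G(H/K)\in F(p)$, namely $C^p$ of that semidirect product lies in $H/K$) closes the argument. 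Your abelian case of the forward direction is correct as written.
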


\begin{lemma}[{\cite[Theorem 1]{shem}}]\label{lc0}
  Let $\mathfrak{F}$ be a normally hereditary $($resp.~Fitting$)$ composition formation and  $F$ be its   canonical local definition. If $F(p)\neq\emptyset$, then $F(p)$ is  normally hereditary $($resp. Fitting$)$  formation.
\end{lemma}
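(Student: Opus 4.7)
The plan is to use an intrinsic semidirect-product characterization of the canonical composition definition and exploit the closure properties of $\mathfrak{F}$. Specifically, for a composition formation $\mathfrak{F}$ with canonical definition $F$ and a prime $p$ with $F(p)\neq\emptyset$, one has the equivalence: $G\in F(p)$ iff $V\rtimes G\in\mathfrak{F}$ for a suitable elementary abelian $p$-group $V$ carrying a faithful (e.g.\ regular) $G$-action. I would first establish or invoke this characterization, which follows from the two defining properties $F(p)\subseteq\mathfrak{F}$ and $F(p)=\mathfrak{N}_p F(p)$ together with the minimality of the canonical definition.

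Given the characterization, normal heredity of $F(p)$ follows cleanly. Take $H\triangleleft G\in F(p)$ and consider $V\rtimes G\in\mathfrak{F}$. One verifies that $V\rtimes H$ is normal in $V\rtimes G$: writing elements as $vg$ and using that $H$ is normal in $G$ together with $V$ being abelian, a direct computation shows $(wg)(vh)(wg)^{-1}\in V\cdot H$ for $w,v\in V$, $g\in G$, $h\in H$. Applying normal heredity of $\mathfrak{F}$ yields $V\rtimes H\in\mathfrak{F}$, whence $H\in F(p)$. For the Fitting case, take $G=AB$ with $A,B\triangleleft G$ and $A,B\in F(p)$; then $V\rtimes A$ and $V\rtimes B$ are normal subgroups of $V\rtimes G$ belonging to $\mathfrak{F}$, and their product equals $V\rtimes G$, so the Fitting property of $\mathfrak{F}$ yields $V\rtimes G\in\mathfrak{F}$ and hence $G\in F(p)$.

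As a preliminary that may be used in either approach, I would record the behavior of $\mathrm{O}_p$ under normal subgroups: if $H\triangleleft G$, then $\mathrm{O}_p(H)=H\cap\mathrm{O}_p(G)$, so $H/\mathrm{O}_p(H)$ embeds as a normal subgroup of $G/\mathrm{O}_p(G)$; consequently the operator $\mathfrak{N}_p$ preserves both normal heredity and the Fitting property. This observation allows one to reduce statements about $F(p)=\mathfrak{N}_pF(p)$ to statements about the quotients $G/\mathrm{O}_p(G)$, which is convenient for the inductive step.

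The main obstacle is justifying the semidirect-product characterization in the Baer--local setting rather than in the fully local (Gasch\"utz--Lubeseder--Schmid) setting, since composition definitions involve only primes $p$ at which $G$ has an abelian $p$-chief factor. If the clean characterization is not directly available, the fallback is induction on $|G|$ with a minimal counterexample: using $F(p)=\mathfrak{N}_pF(p)$ one reduces to the case $\mathrm{O}_p(G)=1$, where the canonical $F(p)$-value admits a more explicit description in terms of quotients $H/C^p(H)$ of $\mathfrak{F}$-groups, and the normal heredity (or Fitting property) of $\mathfrak{F}$ can be transferred through this construction.
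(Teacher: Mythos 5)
The paper offers no proof of this lemma: it is imported verbatim as \cite[Theorem~1]{shem} (Shemetkov, \emph{Composition formations and radicals of finite groups}), so there is no in-paper argument to compare yours against. Judged on its own terms, your route is viable and is a genuinely different (and quite clean) approach: once one has the characterization ``$H\in F(p)$ if and only if $U\rtimes H\in\mathfrak{F}$ for some abelian $p$-group $U$ on which $H$ acts faithfully,'' the deductions of normal heredity (restrict $V\rtimes G$ to the normal subgroup $V\rtimes H$) and of the Fitting property (write $V\rtimes G=(V\rtimes A)(V\rtimes B)$) are immediate, exactly as you describe. Two small points of care: in the normal-heredity step the module $V=\mathbb{F}_p[G]$ restricted to $H$ is no longer the regular $H$-module, so you genuinely need the ``for some faithful module'' version of the characterization, not merely the regular-wreath-product version; and faithfulness must be retained, since it is what forces the relevant centralizer intersection to be a $p$-group.

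The only substantive gap is the one you yourself flag: the characterization carries the entire weight of the proof and is asserted rather than proved, and in the Baer-local setting it is not a quotable textbook fact in the way \cite[IV, 3.14]{s8} is for local formations. It is, however, true, and the proof is short enough that you should include it. For the forward direction, with $W=U\rtimes H$ and $H\in F(p)\subseteq\mathfrak{F}$, every abelian $q$-chief factor of $W$ with $q\neq p$ is centralized by $U$, so $W/C^q(W)\cong H/C^q(H)\in F(q)$; for $q=p$ one uses that $C^p(W)\leq\bigcap_iC_W(U_i/U_{i-1})\leq\mathrm{O}_p(W)$ (the stability group of a chain in the $p$-group $U$ is a $p$-group, and $C_W(U)=U$ by faithfulness and commutativity of $U$), whence $W/C^p(W)\in\mathfrak{N}_pF(p)=F(p)$; and $W/W_{\mathfrak{S}}$ is a quotient of $H\in\mathfrak{F}=F(0)$. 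For the backward direction the same containment $C^p(W)\leq\mathrm{O}_p(W)$ gives $H/\mathrm{O}_p(H)\cong W/\mathrm{O}_p(W)\in F(p)$ and hence $H\in\mathfrak{N}_pF(p)=F(p)$. With this supplied, your argument is complete; without it, the proposal is only a plan.
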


\begin{lemma}[{\cite[X, 13.16(a)]{Hup}}]\label{ls5}
Suppose that $G =G_1\times\dots\times G_n$, where each $G_i$  is a simple
non-Abelian normal subgroup of $G$  and $G_i\neq G_j$  for $i\neq j$. Then   any subnormal subgroup $H$ of $G$ is the direct product
of certain $G_i$.
\end{lemma}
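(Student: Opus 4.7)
The plan is to first classify the normal subgroups of $G$ and then bootstrap the result to subnormal subgroups by an induction on the subnormal defect.

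For the normal case, I would show that every normal subgroup $N$ of $G$ has the form $N = \prod_{i \in I} G_i$ for some $I \subseteq \{1, \dots, n\}$. The argument runs as follows: for each $i$, both $N$ and $G_i$ are normal in $G$, so $N \cap G_i \triangleleft G_i$, and simplicity forces $N \cap G_i \in \{1, G_i\}$. Set $I = \{i : G_i \leq N\}$. The commutator containment $[N, G_i] \leq N \cap G_i$ (valid because $N$ and $G_i$ are both normal in $G$) shows that $N$ centralizes every $G_i$ with $i \notin I$, hence $N \leq C_G\bigl(\prod_{i \notin I} G_i\bigr) = \prod_{i \in I} G_i$, the final equality coming from the fact that each non-abelian simple $G_j$ has trivial center. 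The reverse inclusion is immediate from the definition of $I$, so $N = \prod_{i \in I} G_i$.

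For the subnormal case, I would induct on the subnormal defect $d$ of $H$ in $G$, i.e., the shortest length of a chain $H = H_0 \triangleleft H_1 \triangleleft \dots \triangleleft H_d = G$. If $d \leq 1$ the preceding paragraph already yields the conclusion. For $d \geq 2$, apply the preceding paragraph to the normal subgroup $H_{d-1}$ of $G$, obtaining $H_{d-1} = \prod_{i \in I} G_i$ for some $I \subseteq \{1, \dots, n\}$. Then $H_{d-1}$ is itself a direct product of non-abelian simple normal subgroups, and $H$ is subnormal in $H_{d-1}$ of defect $d - 1$, so the inductive hypothesis applied with $H_{d-1}$ in place of $G$ produces $H = \prod_{j \in J} G_j$ for some $J \subseteq I$, as required.

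The main obstacle is the classification of normal subgroups: one must exclude the possibility of ``diagonal-type'' normal subgroups that mix different factors. The key inputs are the non-abelianness of each $G_i$ (which forces trivial centers, making the centralizer computation pin $N$ down exactly) together with the normality of each $G_i$ in the full group $G$ (which enables the commutator inclusion $[N, G_i] \leq N \cap G_i$). Once this normal-subgroup step is secured, the induction on subnormal defect proceeds automatically, because every intermediate group $H_{d-1}$ arising in the chain is again a direct product of non-abelian simple normal subgroups and so the structural hypothesis of the lemma is inherited at each stage.
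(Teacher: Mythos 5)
Your proof is correct. Note that the paper does not prove this lemma at all --- it is quoted verbatim from Huppert and Blackburn (\emph{Finite Groups}, vol.~3, X, 13.16(a)) --- so there is no in-paper argument to compare against; your two-step argument (classifying normal subgroups via $[N,G_i]\leq N\cap G_i$ and the trivial-centre centralizer computation, then inducting on subnormal defect using the fact that the direct-product hypothesis is inherited by $H_{d-1}$) is the standard proof of this fact and is complete as written.
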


The following lemma directly  follows from  previous lemma

\begin{lemma}\label{l3}
Let a normal subgroup   $N$ of  $G$ be a direct product of isomorphic simple non-abelian groups. Then  $N$ is a direct product of minimal normal subgroups of $G$.
\end{lemma}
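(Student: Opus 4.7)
The plan is to directly leverage Lemma \ref{ls5} together with the observation that $G$ permutes the simple factors of $N$ by conjugation and that the orbits of this action give the minimal normal subgroups of $G$ contained in $N$.

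First I would fix a decomposition $N=N_1\times\dots\times N_k$ into the isomorphic simple non-abelian factors. Since each $N_i$ is simple non-abelian and normal in $N$, and since the $N_i$ are pairwise distinct subgroups, the hypothesis of Lemma \ref{ls5} is satisfied (with $N$ in place of $G$ there). Next I would show that $G$ acts on the set $\{N_1,\dots,N_k\}$ by conjugation: for any $g\in G$, the subgroup $N_i^g$ is still a simple non-abelian normal subgroup of $N$ (because $N\trianglelefteq G$), hence subnormal in $N$, hence by Lemma \ref{ls5} a direct product of some of the $N_j$; simplicity forces $N_i^g=N_j$ for a unique $j$.

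Now let $O_1,\dots,O_r$ be the orbits of this $G$-action and set $M_t=\prod_{N_i\in O_t}N_i$. Each $M_t$ is $G$-invariant by construction and is a normal subgroup of $G$, and clearly $N=M_1\times\dots\times M_r$. The key step is to verify that each $M_t$ is a \emph{minimal} normal subgroup of $G$. For this, take a nontrivial $G$-invariant subgroup $K\leq M_t$. Then $K\trianglelefteq N$, hence $K$ is subnormal in $N$, so Lemma \ref{ls5} gives $K=\prod_{i\in I}N_i$ for some $I\subseteq\{i:N_i\in O_t\}$. Since $K$ is $G$-invariant, the index set $I$ is stable under the $G$-action on the orbit $O_t$ and thus equals the whole $O_t$, giving $K=M_t$.

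I do not anticipate any real obstacle here: the argument is an orbit-decomposition observation combined with the previous lemma. The only point requiring minor care is the assertion that conjugation by $G$ sends each $N_i$ to some $N_j$ rather than to a ``diagonal'' simple subgroup of $N$; this is exactly where Lemma \ref{ls5} is used, since $N_i^g$ is subnormal in $N$ and simple.
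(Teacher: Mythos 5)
Your argument is correct and takes essentially the same route as the paper: the paper simply asserts that the lemma ``directly follows'' from Lemma~\ref{ls5}, and your orbit-decomposition argument (conjugation permutes the simple factors, orbit products are minimal normal in $G$) is precisely the standard deduction being invoked. There are no gaps.
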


A function of the form $f: \mathbb{P}\rightarrow\{formations\}$ is called a \emph{formation function}. Recall \cite[IV, 3.1]{s8} that a formation $\mathfrak{F}$ is called \emph{local} if $\mathfrak{F}=(G\,|\, G/C_G(H/K)\in f(p)$ for every $p\in\pi(H/K)$ and every chief factor $H/K$ of $G$) for some formation function $f$. In this case $f$ is called a \emph{local definition} of $\mathfrak{F}$.
By the Gasch\"utz-Lubeseder-Schmid theorem, a  formation is local if and only if it is non-empty and saturated. Recall \cite[IV, 3.8]{s8} that if $\mathfrak{F}$ is a local formation, there exists a unique formation function $F$, defining $\mathfrak{F}$, such that    $F(p) = \mathfrak{N}_p F(p)\subseteq\mathfrak{F}$ for every $p\in\mathbb{P}$. In this case $F$ is called the \emph{canonical local definition}  of $\mathfrak{F}$.

\begin{lemma}[{\cite[1, 1.15]{s5}}]\label{l1}
Let $H/K$ be a chief factor of  $G$. Then

$(1)$ If $\mathfrak{F}$ is a composition formation and $F$ is its canonical composition definition, then $H/K$   is $\mathfrak{F}$-central if and only if $G/C_G(H/K)\in F(p)$ for all $p\in\pi(H/K)$  in the case when $H/K$ is abelian, and $G/C_G(H/K)\in\mathfrak{F}$ when $H/K$ is non-anelian.

$(2)$ If $\mathfrak{F}$ is a local formation and $F$ is its canonical local definition, then $H/K$   is $\mathfrak{F}$-central if and only if $G/C_G(H/K)\in F(p)$ for all $p\in\pi(H/K)$.
\end{lemma}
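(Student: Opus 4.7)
Setting $V := H/K$ and $Q := G/C_G(H/K)$, the faithful irreducible action makes $V$ a minimal normal subgroup of $L := V \rtimes Q$; by definition, $H/K$ is $\mathfrak{F}$-central iff $L \in \mathfrak{F}$. I would split into the abelian case ($V$ elementary abelian of exponent $p$, so $\pi(V) = \{p\}$) and the non-abelian case ($V$ a direct product of isomorphic simple non-abelian groups).

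In the abelian case both parts of the lemma give the same criterion $Q \in F(p)$. Since $V$ is abelian and $Q$ acts faithfully, $C_L(V) = V$, and $V$ being the unique minimal normal subgroup of $L$ gives $C^p(L) = V$. If $L \in \mathfrak{F} = CLF(F)$, then $L/C^p(L) \in F(p)$, so $Q \cong L/V \in F(p)$. Conversely, if $Q \in F(p)$, then $V \leq \mathrm{O}_p(L)$ and $L/V \cong Q \in F(p)$ give $L/\mathrm{O}_p(L) \in F(p)$, whence $L \in \mathfrak{N}_p F(p) = F(p) \subseteq \mathfrak{F}$.

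In the non-abelian case the ``only if'' direction is immediate by quotient closure of $\mathfrak{F}$: $Q \cong L/V$. For the ``if'' direction in part $(2)$, I would use $F(p) \subseteq \mathfrak{F}$ for the canonical local definition, so that $Q \in F(p)$ yields $Q \in \mathfrak{F}$; the chief-factor characterization of saturated formations then verifies $L \in \mathfrak{F}$, since the chief factor $V$ has $L/C_L(V) = Q \in F(p)$ for every $p \in \pi(V)$, while the remaining chief factors of $L$ correspond to chief factors of quotients of $Q \in \mathfrak{F}$ and are thus $\mathfrak{F}$-central in $L$. For part $(1)$: one first shows $L_\mathfrak{S} = 1$ (because $V$ and $C_L(V)$ are semisimple non-abelian, so $L_\mathfrak{S} \cap V = 1$ forces $L_\mathfrak{S} \leq C_L(V)$, which itself has trivial soluble radical), and then observes that every abelian $p$-chief factor of $L$ centralizes $V$, so condition (ii) of $CLF(F)$ reduces to an $F(p)$-condition on a quotient of $Q \in \mathfrak{F}$, which holds.

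The main obstacle will be the non-abelian ``if'' direction of part $(1)$: once condition (ii) of $CLF(F)$ is verified, the remaining condition $L/L_\mathfrak{S} \in F(0) = \mathfrak{F}$ reduces, given $L_\mathfrak{S} = 1$, to the apparently circular $L \in \mathfrak{F}$. The resolution lies in the Shemetkov-Skiba framework, where composition satellites may be indexed over all simple groups with $F(A) = \mathfrak{F}$ for non-abelian simple $A$, so that the non-abelian contribution to $\mathfrak{F}$-membership is encoded directly rather than through the self-referential $F(0) = \mathfrak{F}$; alternatively, an inductive structural argument on $|L|$ establishes $L \in \mathfrak{F}$ from $Q \in \mathfrak{F}$ together with the $F(p)$-conditions.
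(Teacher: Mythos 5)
The paper never proves this lemma: it is quoted from Guo's book \cite[1, 1.15]{s5}, so there is no internal proof to compare your argument against. Taken on its own terms, your reconstruction is sound in outline: the reduction to $L=V\rtimes Q$, the computation $C^p(L)=V$ in the abelian case, the use of $F(p)=\mathfrak{N}_pF(p)\subseteq\mathfrak{F}$ for the converse, and your diagnosis of the $F(0)=\mathfrak{F}$ circularity in the non-abelian ``if'' direction of part $(1)$ are all correct; the resolution via a Baer function indexed by simple groups (with value $\mathfrak{F}$ on non-abelian simple groups) is indeed how the cited source organizes this material, and is the honest content of the lemma rather than a side remark.

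One step fails as written: the ``only if'' direction of part $(2)$ when $H/K$ is non-abelian. Quotient closure gives only $Q\simeq L/V\in\mathfrak{F}$, whereas the lemma asserts $Q\in F(p)$ for every $p\in\pi(H/K)$, and $F(p)$ is in general a proper subformation of $\mathfrak{F}$; so this is a non sequitur as stated. To get the stated conclusion you must apply the defining property of the local formation $LF(F)$ to the chief factor $V$ of $L$, which yields $L/C_L(V)\in F(p)$ for all $p\in\pi(V)$, and then identify $L/C_L(V)$ with $Q$. This identification holds because $\mathrm{Inn}(V)$ is contained in the faithful image of $Q$ in $\mathrm{Aut}(V)$ (the image of $H$ in $G/C_G(H/K)$ induces all inner automorphisms of $H/K$), so the image of all of $L$ in $\mathrm{Aut}(V)$ is again $Q$. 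The same observation, in the form $C_L(V)=\{v\,\iota(v)^{-1} : v\in V\}\simeq V$ where $\iota$ embeds $V$ into $Q$ as inner automorphisms, is also what justifies your unproved assertion that $C_L(V)$ is semisimple and hence that $L_\mathfrak{S}=1$; without $\mathrm{Inn}(V)\leq Q$ that assertion would not be automatic.
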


\begin{lemma}[{\cite[IV, 3.16]{s8}}]\label{l0} Let $F$ be the canonical local definition of a local formation $\mathfrak{F}$. Then $\mathfrak{F}$ is  hereditary   if and only if $F(p)$ is  hereditary for all $p\in\mathbb{P}$.
\end{lemma}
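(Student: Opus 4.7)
The plan is to prove each implication separately, using the characterization $G\in\mathfrak{F}$ iff $G/C_G(U/V)\in F(p)$ for every chief factor $U/V$ of $G$ and every $p\in\pi(U/V)$, together with the defining property $F(p)=\mathfrak{N}_pF(p)\subseteq\mathfrak{F}$ of the canonical local definition.

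For the implication $(\Leftarrow)$, assume each $F(p)$ is hereditary, and fix $G\in\mathfrak{F}$, $H\le G$, together with a chief series $1=G_0<\dots<G_n=G$. The chain $H\cap G_0\le H\cap G_1\le\dots\le H\cap G_n=H$ refines to a chief series of $H$, and each resulting $H$-chief factor $A/B$ sits between some $H\cap G_i\le B<A\le H\cap G_{i+1}$, so it injects into $G_{i+1}/G_i$ via $aB\mapsto aG_i$. Hence $\pi(A/B)\subseteq\pi(G_{i+1}/G_i)$; moreover, for $h\in H\cap C_G(G_{i+1}/G_i)$ and $a\in A$ the commutator $[h,a]$ lies in $G_i\cap H\subseteq B$, giving $H\cap C_G(G_{i+1}/G_i)\le C_H(A/B)$. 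For each $p\in\pi(A/B)$, the quotient $H/(H\cap C_G(G_{i+1}/G_i))$ is isomorphic to a subgroup of $G/C_G(G_{i+1}/G_i)\in F(p)$, and heredity of $F(p)$ together with the formation property yields $H/C_H(A/B)\in F(p)$. Doing this for every chief factor of $H$ shows $H\in\mathfrak{F}$.

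For the implication $(\Rightarrow)$, assume $\mathfrak{F}$ hereditary and take $G\in F(p)$ with $H\le G$. Let $V=\mathbb{F}_p[G]$ be the regular $\mathbb{F}_p[G]$-module (faithful as an $\mathbb{F}_p[H]$-module by restriction), and form the natural semidirect product $V\rtimes G$. Since $V$ is a normal $p$-subgroup with $(V\rtimes G)/V\cong G\in F(p)$, the identity $F(p)=\mathfrak{N}_pF(p)$ gives $V\rtimes G\in F(p)\subseteq\mathfrak{F}$; heredity of $\mathfrak{F}$ then yields $V\rtimes H\in\mathfrak{F}$. For any chief factor $W$ of $V\rtimes H$ contained in $V$, the centralizer $C_{V\rtimes H}(W)$ equals $V\cdot C_H(W)$, so $H/C_H(W)\cong(V\rtimes H)/C_{V\rtimes H}(W)\in F(p)$. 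Setting $K=\bigcap_W C_H(W)$, the $\mathrm{R}_0$-closure of the formation $F(p)$ gives $H/K\in F(p)$; and since $K$ acts unipotently on the faithful module $V$, $K$ is a $p$-group, hence $K\le O_p(H)$. Thus $H/O_p(H)\in F(p)$, and one more application of $F(p)=\mathfrak{N}_pF(p)$ produces $H\in F(p)$.

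The main obstacle lies in the $(\Leftarrow)$ direction, namely making rigorous the identification of each $H$-chief factor as an $H$-section of a $G$-chief factor together with the centralizer inclusion above. Non-abelian chief factors $G_{i+1}/G_i$ (direct products of isomorphic simple non-abelian groups) require Lemma \ref{ls5} or \ref{l3} to ensure that the intervening $H$-invariant subnormal subsections are themselves products of some of the simple factors, so that the diagram-chase proceeds uniformly in the abelian and non-abelian cases.
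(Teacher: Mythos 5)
Your proof is correct. The paper gives no argument for this lemma at all --- it is quoted from Doerk and Hawkes [IV, 3.16] --- so there is nothing internal to compare against; what you have written is essentially the standard proof from that source (intersect a chief series of $G$ with $H$ for one direction; a faithful $p$-module, the identity $F(p)=\mathfrak{N}_pF(p)$, and the stability-group argument for the other). Two small points. In the $(\Leftarrow)$ direction the map $aB\mapsto aG_i$ need not be well defined, since $B$ need not be contained in $G_i$; what is true is that $A/(A\cap G_i)$ injects into $G_{i+1}/G_i$ while $A/B$ is a quotient of $A/(A\cap G_i)$, and this already gives $\pi(A/B)\subseteq\pi(G_{i+1}/G_i)$; the centralizer inclusion you establish independently by the commutator computation $[h,a]\in H\cap G_i\le B$, so nothing is lost. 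Second, the reservation in your closing paragraph is unnecessary: that commutator computation is completely uniform in the abelian and non-abelian cases (it never uses the internal structure of $G_{i+1}/G_i$), so Lemmas \ref{ls5} and \ref{l3} play no role here; the argument is already complete as written.
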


Recall \cite[3.4.5]{s9} that every  solubly saturated  formation $\mathfrak{F}$ contains the greatest saturated subformation $\mathfrak{F}_l$ with respect to set inclusion.

\begin{theorem}[{\cite[3.4.5]{s9}}]\label{t4}
       Let  $F$ be the canonical composition definition of a non-empty solubly saturated formation $\mathfrak{F}$. Then $f$ is a  local definition of   $\mathfrak{F}_l$, where $f(p)=F(p)$ for all $p\in\mathbb{P}$.\end{theorem}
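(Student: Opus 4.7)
The plan is to show that $\mathfrak{G}:=LF(f)$, the local formation defined by $f(p)=F(p)$, coincides with $\mathfrak{F}_l$, by proving the two inclusions separately. The main tools are Lemmas \ref{lz} and \ref{l1} together with the maximality property built into canonical definitions, namely that $F(p)$ is the largest formation $\mathfrak{X}$ satisfying $\mathfrak{N}_p \mathfrak{X} \subseteq \mathfrak{F}$.

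For the inclusion $\mathfrak{G}\subseteq\mathfrak{F}_l$, I would first establish $\mathfrak{G}\subseteq\mathfrak{F}$. Pick $G\in\mathfrak{G}$ and any chief factor $H/K$ of $G$. If $H/K$ is an abelian $p$-chief factor, then by definition of $LF(f)$ we have $G/C_G(H/K)\in f(p)=F(p)$, which by Lemma \ref{l1}(1) says $H/K$ is $\mathfrak{F}$-central. If $H/K$ is non-abelian, choose any $p\in\pi(H/K)$; then $G/C_G(H/K)\in f(p)=F(p)\subseteq\mathfrak{F}$, which is exactly the non-abelian $\mathfrak{F}$-centrality condition of Lemma \ref{l1}(1). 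Hence every chief factor of $G$ is $\mathfrak{F}$-central, so $G=\mathrm{Z}_\mathfrak{F}(G)$ and Lemma \ref{lz} gives $G\in\mathfrak{F}$. Because $\mathfrak{G}$ is saturated by construction and $\mathfrak{F}_l$ is by definition the greatest saturated subformation of $\mathfrak{F}$, I conclude $\mathfrak{G}\subseteq\mathfrak{F}_l$.

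For the reverse inclusion, let $F^*$ be the canonical local definition of the saturated formation $\mathfrak{F}_l$, so $F^*(p)=\mathfrak{N}_p F^*(p)\subseteq\mathfrak{F}_l\subseteq\mathfrak{F}$. Since then $\mathfrak{N}_p F^*(p)=F^*(p)\subseteq\mathfrak{F}$, the maximality characterization of $F(p)$ applied with $\mathfrak{X}=F^*(p)$ yields $F^*(p)\subseteq F(p)=f(p)$ for every prime $p$, whence $\mathfrak{F}_l=LF(F^*)\subseteq LF(f)=\mathfrak{G}$, which completes the argument.

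The decisive technical input is the maximality characterization used in the reverse direction: that the canonical composition definition $F(p)$ is the largest formation $\mathfrak{X}$ with $\mathfrak{N}_p\mathfrak{X}\subseteq\mathfrak{F}$. This is standard in the theory of solubly saturated formations but must be invoked explicitly, and I expect it to be the main technical obstacle. An alternative via a minimal counterexample $G\in\mathfrak{F}_l\setminus\mathfrak{G}$ of minimal order reduces, using that $\mathfrak{G}$ is a saturated formation, to the case where $G$ has a unique minimal normal subgroup $N\not\leq\Phi(G)$; one then needs to verify $G/C_G(N)\in F(p)$ for each $p\in\pi(N)$, which is immediate for abelian $N$ but, for non-abelian $N$, requires upgrading the weak condition $G/C_G(N)\in\mathfrak{F}$ to the strong condition $G/C_G(N)\in F(p)$. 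This upgrade ultimately appeals to the same maximality, so the maximality of $F(p)$ appears unavoidable in either route.
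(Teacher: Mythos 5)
First, note that the paper does not actually prove Theorem \ref{t4}: it is quoted from \cite[3.4.5]{s9}, so there is no internal proof to compare yours with, and your attempt has to stand on its own. On that basis, your first inclusion is complete and correct: for $G$ in the local formation $\mathfrak{G}$ defined by $f$, Lemma \ref{l1}(1) shows every chief factor of $G$ is $\mathfrak{F}$-central (using $f(p)=F(p)\subseteq\mathfrak{F}$ for the non-abelian case), Lemma \ref{lz} gives $G\in\mathfrak{F}$, and since $\mathfrak{G}$ is a nonempty local, hence saturated, formation it lies in the largest saturated subformation $\mathfrak{F}_l$.

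The gap is exactly where you suspect it. For the reverse inclusion you need $F^*(p)\subseteq F(p)$, and you derive it from the assertion that $F(p)$ is the largest formation $\mathfrak{X}$ with $\mathfrak{N}_p\mathfrak{X}\subseteq\mathfrak{F}$. That assertion happens to be true, but it is not what the word ``canonical'' gives you: the quoted property is only that $F$ is the \emph{unique composition definition of $\mathfrak{F}$} satisfying $F(p)=\mathfrak{N}_pF(p)\subseteq\mathfrak{F}$, and uniqueness among composition definitions of $\mathfrak{F}$ does not formally yield maximality among arbitrary formations $\mathfrak{X}$ with $\mathfrak{N}_p\mathfrak{X}\subseteq\mathfrak{F}$ (your $\mathfrak{X}=F^*(p)$ is not a priori a value of any definition of $\mathfrak{F}$ itself). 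So the decisive step is asserted, not proved. It can be filled by an argument of the same kind the paper uses in the proof of Theorem \ref{t3}: given $X\in F^*(p)$, reduce to $\mathrm{O}_p(X)=1$ using $F^*(p)=\mathfrak{N}_pF^*(p)$ and $F(p)=\mathfrak{N}_pF(p)$; choose a faithful semisimple $\mathbb{F}_pX$-module $V$ and set $T=V\rtimes X$; then $T/\mathrm{O}_p(T)\simeq X\in F^*(p)\subseteq\mathfrak{F}_l\subseteq\mathfrak{F}$ gives $T\in\mathfrak{N}_pF^*(p)=F^*(p)\subseteq\mathfrak{F}$; faithfulness and semisimplicity of $V$ give $C^p(T)\leq C_T(V)=V$, while the composition definition of $\mathfrak{F}$ gives $T/C^p(T)\in F(p)$; since $X\simeq T/V$ is a quotient of $T/C^p(T)$ and $F(p)$ is a formation, $X\in F(p)$. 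Without this construction (or an explicit citation of the maximality property of canonical Baer-local definitions), the second half of your proof is incomplete; your minimal-counterexample alternative stalls at the same point for non-abelian socle, as you correctly observe.
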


\section{On the class of $\mathfrak{J}cs$-$\mathfrak{F}$-groups}

\subsection{Properties of $\mathfrak{J}cs$-$\mathfrak{F}$-groups}

\begin{proposition}\label{pj0}
  For any class of groups $\mathfrak{X}$, the class $\mathfrak{X}_{\mathfrak{J}cs}$ is a formation.
\end{proposition}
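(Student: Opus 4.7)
The plan is to verify directly the two defining axioms of a formation for $\mathfrak{X}_{\mathfrak{J}cs}$: closure under homomorphic images, and the property that $G/(M\cap N) \in \mathfrak{X}_{\mathfrak{J}cs}$ whenever $G/M, G/N \in \mathfrak{X}_{\mathfrak{J}cs}$. The guiding observation is that the three relevant conditions on a chief factor $H/K$ of $G$ --- membership in $\mathfrak{X}$, being $\mathfrak{X}$-central in $G$ (which involves $G/C_G(H/K)$), and being a simple $\mathfrak{J}$-group --- all depend only on the $G$-isomorphism class of $H/K$.

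For closure under quotients, let $G \in \mathfrak{X}_{\mathfrak{J}cs}$ and $N \trianglelefteq G$. Every chief factor $\bar H/\bar K$ of $G/N$ corresponds to a unique chief factor $H/K$ of $G$ with $N \leq K$, and the correspondence is a $G$-isomorphism; one then has the identification $(G/N)/C_{G/N}(\bar H/\bar K) \cong G/C_G(H/K)$ (using $N \leq K \leq C_G(H/K)$). The classification of $H/K$ therefore transfers directly to $\bar H/\bar K$, so $G/N \in \mathfrak{X}_{\mathfrak{J}cs}$.

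For the second axiom, suppose $G/M, G/N \in \mathfrak{X}_{\mathfrak{J}cs}$. Passing to $G/(M\cap N)$ via the first step, I may assume $M \cap N = 1$ and show $G \in \mathfrak{X}_{\mathfrak{J}cs}$. The key claim is that every chief factor of $G$ is $G$-isomorphic to a chief factor of $G/M$ or of $G/N$. To prove it I refine the normal series $1 \trianglelefteq M \trianglelefteq G$ to a chief series of $G$. Factors of this series above $M$ are chief factors of $G/M$; for a chief factor $H/K$ with $H \leq M$, the assumption $M \cap N = 1$ forces $H \cap N = 1$, so inside the internal direct product $HN = H \times N$ one checks that $HN/KN \cong H/K$ as $G$-groups and that $HN/KN$ is minimal normal in $G/KN$, i.e., a chief factor of $G/N$. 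The Jordan-Hölder theorem for chief series then extends the claim to every chief factor of $G$ up to $G$-isomorphism. With this in place, each chief factor of $G$ inherits its $\mathfrak{J}cs$-$\mathfrak{X}$ classification from the corresponding chief factor of $G/M$ or $G/N$, yielding $G \in \mathfrak{X}_{\mathfrak{J}cs}$. The main technical point to nail down carefully is the minimality of $HN/KN$ in $G/KN$, which should follow from a short computation using $H \cap N = 1$ and the minimality of $H/K$ in $G/K$; everything else is routine transport of structure along $G$-isomorphisms.
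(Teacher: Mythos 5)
Your proposal is correct and follows essentially the same route as the paper: both rest on the observation that the three defining conditions on a chief factor ($\mathfrak{X}$-membership, $\mathfrak{X}$-centrality via $G/C_G(H/K)$, simplicity as a $\mathfrak{J}$-group) are invariant under $G$-isomorphism. The paper then simply delegates the remaining routine work to the Isomorphism Theorems of Doerk--Hawkes, whereas you carry out that verification explicitly (the quotient correspondence and the $H/K \cong HN/KN$ argument with $M\cap N=1$, both of which check out).
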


\begin{proof}
  Let $A$ and $B$ be a $G$-isomorphic $G$-groups. Note that if $A$ is simple, then $B$ is simple.  Also note that $A\rtimes G/C_G(A)\simeq B\rtimes G/C_G(B)$. Now Proposition \ref{pj0} follows from the Isomorphism Theorems \cite[2.1A]{s8}. \end{proof}

\begin{proposition}\label{pj1}
  Let $\mathfrak{N}\subseteq\mathfrak{F}$ be a composition formation with the canonical composition definition $F$.  Then $\mathfrak{F}_{\mathfrak{J}cs}$  is a composition formation and  has the canonical composition definition $F_{\mathfrak{J}cs}$ such that $F_{\mathfrak{J}cs}(0)=\mathfrak{F}_{\mathfrak{J}cs}$ and $F_{\mathfrak{J}cs}(p)=F(p)$ for all $p\in\mathbb{P}$.
\end{proposition}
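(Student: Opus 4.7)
The plan is to exhibit $F_{\mathfrak{J}cs}$ as a composition definition of $\mathfrak{F}_{\mathfrak{J}cs}$ and then to check the canonicity conditions; uniqueness of the canonical composition definition \cite[1, 1.6]{s5} will then finish the job. Since Proposition \ref{pj0} already gives that $\mathfrak{F}_{\mathfrak{J}cs}$ is a formation, the work reduces to verifying the equality of classes $\mathfrak{F}_{\mathfrak{J}cs}=CLF(F_{\mathfrak{J}cs})$.

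First I would note the auxiliary inclusion $\mathfrak{F}\subseteq\mathfrak{F}_{\mathfrak{J}cs}$: by Lemma \ref{lz} every $\mathfrak{F}$-group equals its $\mathfrak{F}$-hypercenter, so each of its chief factors is $\mathfrak{F}$-central and therefore itself an $\mathfrak{F}$-group, whence the ``other chief factors'' clause is vacuous. For the forward inclusion $\mathfrak{F}_{\mathfrak{J}cs}\subseteq CLF(F_{\mathfrak{J}cs})$, take $G\in\mathfrak{F}_{\mathfrak{J}cs}$. As $\mathfrak{F}_{\mathfrak{J}cs}$ is a formation, $G/G_\mathfrak{S}\in\mathfrak{F}_{\mathfrak{J}cs}=F_{\mathfrak{J}cs}(0)$. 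For any prime $p$ with $G$ having an abelian $p$-chief factor $H/K$, the hypothesis $\mathfrak{N}\subseteq\mathfrak{F}$ places $H/K$ in $\mathfrak{F}$, so the definition of $\mathfrak{J}cs$-$\mathfrak{F}$-group forces $H/K$ to be $\mathfrak{F}$-central, and Lemma \ref{l1}(1) yields $G/C_G(H/K)\in F(p)$. Intersecting over all abelian $p$-chief factors and using that $F(p)$ is a formation gives $G/C^p(G)\in F(p)=F_{\mathfrak{J}cs}(p)$.

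For the reverse inclusion, I would take $G\in CLF(F_{\mathfrak{J}cs})$ and verify the $\mathfrak{J}cs$-$\mathfrak{F}$-group condition chief factor by chief factor. If $H/K$ is an abelian $p$-chief factor, then $C^p(G)\leq C_G(H/K)$ gives $G/C_G(H/K)\in F(p)$, so Lemma \ref{l1}(1) makes $H/K$ $\mathfrak{F}$-central, and $H/K\in\mathfrak{N}\subseteq\mathfrak{F}$. For a non-abelian chief factor I would pass through a chief series of $G$ containing $G_\mathfrak{S}$: soluble chief factors sit below $G_\mathfrak{S}$, so by Jordan--H\"older every non-abelian chief factor of $G$ is $G$-isomorphic to a chief factor of $G/G_\mathfrak{S}$. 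Since $G/G_\mathfrak{S}\in F_{\mathfrak{J}cs}(0)=\mathfrak{F}_{\mathfrak{J}cs}$, that factor is either an $\mathfrak{F}$-central $\mathfrak{F}$-factor or a simple $\mathfrak{J}$-group in $G/G_\mathfrak{S}$; and the inclusion $[G_\mathfrak{S},H]\leq G_\mathfrak{S}\leq K$ shows $C_G(H/K)/G_\mathfrak{S}=C_{G/G_\mathfrak{S}}(H/K)$, so $\mathfrak{F}$-centrality transfers back to $G$.

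Once $\mathfrak{F}_{\mathfrak{J}cs}=CLF(F_{\mathfrak{J}cs})$ is established, $\mathfrak{F}_{\mathfrak{J}cs}$ is composition and canonicity is immediate: $F_{\mathfrak{J}cs}(0)=\mathfrak{F}_{\mathfrak{J}cs}$ by definition; $F_{\mathfrak{J}cs}(p)=F(p)=\mathfrak{N}_pF(p)=\mathfrak{N}_pF_{\mathfrak{J}cs}(p)$ because $F$ is canonical for $\mathfrak{F}$; and $F_{\mathfrak{J}cs}(p)=F(p)\subseteq\mathfrak{F}\subseteq\mathfrak{F}_{\mathfrak{J}cs}$ by the auxiliary inclusion. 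The point requiring most care is the non-abelian case in the reverse inclusion, where the Jordan--H\"older reduction to chief factors of $G/G_\mathfrak{S}$ and the lift of $\mathfrak{F}$-centrality from $G/G_\mathfrak{S}$ back to $G$ itself must be checked carefully.
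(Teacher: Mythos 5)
Your verification of the class equality $\mathfrak{F}_{\mathfrak{J}cs}=CLF(F_{\mathfrak{J}cs})$ is sound and is essentially the paper's argument run directly rather than by minimal counterexample: where the paper isolates a unique minimal normal subgroup and splits into the abelian and non-abelian cases, you analyse an arbitrary chief factor, and your Jordan--H\"older reduction of the non-abelian factors to $G/G_{\mathfrak{S}}$ (together with $G_{\mathfrak{S}}\leq C_G(H/K)$) plays the role of the paper's observation that $G_{\mathfrak{S}}\leq C_G(N)=1$ for a unique non-abelian minimal normal subgroup. That part of the proposal I would accept as written.

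The genuine gap is in your auxiliary inclusion $\mathfrak{F}\subseteq\mathfrak{F}_{\mathfrak{J}cs}$. Lemma \ref{lz} does give that every chief factor of an $\mathfrak{F}$-group is $\mathfrak{F}$-central, but the inference ``$\mathfrak{F}$-central, and therefore itself an $\mathfrak{F}$-group'' is not valid under the stated hypotheses: for a non-abelian chief factor $H/K$, $\mathfrak{F}$-centrality means $(H/K)\rtimes G/C_G(H/K)\in\mathfrak{F}$, and to extract $H/K\in\mathfrak{F}$ from this you would need $\mathfrak{F}$ to be closed under normal subgroups, or at least to contain the composition factors of its members --- neither of which follows from ``$\mathfrak{N}\subseteq\mathfrak{F}$ composition''. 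The paper is explicitly careful here: it claims $\mathfrak{F}\subseteq\mathfrak{F}_{\mathfrak{J}cs}$ only under the extra hypothesis that $\mathfrak{F}$ contains every composition factor of every $\mathfrak{F}$-group (see the remark after Definition \ref{def1} and the hypotheses of Theorem \ref{T2}). Concretely, if some $G\in\mathfrak{F}$ has a chief factor of the form $S\times S$ with $S$ simple non-abelian and $S\times S\notin\mathfrak{F}$, then $G\notin\mathfrak{F}_{\mathfrak{J}cs}$ regardless of $\mathfrak{J}$, since such a factor is neither a chief $\mathfrak{F}$-factor nor simple. In your write-up this inclusion is invoked only to justify the canonicity requirement $F_{\mathfrak{J}cs}(p)=F(p)\subseteq\mathfrak{F}_{\mathfrak{J}cs}$, so that step remains unproved. (The paper's own proof stops at the equality $\mathfrak{F}_{\mathfrak{J}cs}=CLF(F_{\mathfrak{J}cs})$ and does not verify the canonicity conditions at all, so you have matched what is actually proved there; but the stated justification of the extra inclusion is incorrect and should be deleted or replaced by an argument that does not pass through $\mathfrak{F}\subseteq\mathfrak{F}_{\mathfrak{J}cs}$.)
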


\begin{proof}
  By Proposition \ref{pj0}, $\mathfrak{F}_{\mathfrak{J}cs}$ is a formation.
  Let $\mathfrak{H}=CLF(F_{\mathfrak{J}cs})$.

  Assume  $\mathfrak{H}\setminus\mathfrak{F}_{\mathfrak{J}cs}\neq\emptyset$. Let chose a minimal order group $G$ from $\mathfrak{H}\setminus\mathfrak{F}_{\mathfrak{J}cs}$.
  Since $\mathfrak{F}_{\mathfrak{J}cs}$ is a formation, $G$ has an unique minimal normal subgroup $N$ and $G/N\in\mathfrak{F}_{\mathfrak{J}cs}$.

  Suppose that $N$ is abelian. Then it is a $p$-group. Since $N$ is $\mathfrak{H}$-central in $G$ by Lemma \ref{lz}, $G/C_G(N)\in F_{\mathfrak{J}cs}(p)$ by Lemma \ref{l1}. From $F_{\mathfrak{J}cs}(p)=F(p)$ and Lemma \ref{l1} it follows that $N$ is $\mathfrak{F}$-central chief factor of $G$. Hence $G\in\mathfrak{F}_{\mathfrak{J}cs}$, a contradiction.

  So  $N$ is non-abelian. Note that $G_\mathfrak{S}\leq C_G(N)$ by \cite[1, 1.5]{s5}. So $G\simeq G/C_G(N)\in F_{\mathfrak{J}cs}(0)=\mathfrak{F}_{\mathfrak{J}cs}$, the contradiction. Thus $\mathfrak{H}\subseteq\mathfrak{F}_{\mathfrak{J}cs}$.

Assume  $\mathfrak{F}_{\mathfrak{J}cs}\setminus\mathfrak{H}\neq\emptyset$. Let chose a minimal order group $G$ from $\mathfrak{F}_{\mathfrak{J}cs}\setminus\mathfrak{H}$. Since $\mathfrak{H}$ is a formation, $G$ has an unique minimal normal subgroup $N$ and $G/N\in\mathfrak{H}$.

If $N$ is abelian, then $G/C_G(N)\in F(p)$ for some $p$ by Lemmas \ref{lz} and \ref{l1}. From $F_{\mathfrak{J}cs}(p)=F(p)$ and Lemma \ref{l1} it follows that $N$ is $\mathfrak{H}$-central in $G$. So $G\in\mathfrak{H}$, a contradiction.

 Hence $N$ is non-abelian.  It means that $G_\mathfrak{S}=1$. Therefore $G/G_\mathfrak{S}\simeq G\in\mathfrak{F}_{\mathfrak{J}cs}=F_{\mathfrak{J}cs}(0)$. Note that $N\leq C^p(G)$ for all primes $p$. So $C^p(G)/N=C^p(G/N)$. From $G/N\in\mathfrak{H}$ it follows that $G/C^p(G)\simeq (G/N)/C^p(G/N)\in F_{\mathfrak{J}cs}(p)$ for any $p$ such that $G$ has an abelian chief $p$-factor. Therefore $G\in\mathfrak{H}$, the contradiction. So $\mathfrak{F}_{\mathfrak{J}cs}\subseteq \mathfrak{H}$. Thus   $\mathfrak{F}_{\mathfrak{J}cs}=\mathfrak{H}$.
\end{proof}

\begin{proposition}\label{jsn}
  Let $\mathfrak{N}\subseteq\mathfrak{F}$ be a    normally hereditary  composition formation. Then $\mathfrak{F}_{\mathfrak{J}cs}$ is  normally hereditary.
\end{proposition}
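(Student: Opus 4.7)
The plan is a minimal counterexample argument. Suppose $\mathfrak{F}_{\mathfrak{J}cs}$ is not normally hereditary, and let $G$ be of minimal order admitting a normal subgroup $H\trianglelefteq G$ with $G\in\mathfrak{F}_{\mathfrak{J}cs}$ but $H\not\in\mathfrak{F}_{\mathfrak{J}cs}$. Then $H\neq 1$, so there is a minimal normal subgroup $N$ of $G$ with $N\leq H$. Since $\mathfrak{F}_{\mathfrak{J}cs}$ is a formation (Proposition \ref{pj0}), $G/N\in\mathfrak{F}_{\mathfrak{J}cs}$, and minimality of $G$ applied to $H/N\trianglelefteq G/N$ gives $H/N\in\mathfrak{F}_{\mathfrak{J}cs}$. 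Chief factors of $H$ above $N$ correspond bijectively to chief factors of $H/N$ with matching centralizers, so they already satisfy the $\mathfrak{J}cs$-$\mathfrak{F}$-condition in $H$. Thus the task reduces to chief factors of $H$ refining $1<N$.

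By Proposition \ref{pj1}, the canonical composition definition $F_{\mathfrak{J}cs}$ of $\mathfrak{F}_{\mathfrak{J}cs}$ satisfies $F_{\mathfrak{J}cs}(p)=F(p)$, and by Lemma \ref{lc0} each $F(p)$ is normally hereditary (using $\mathfrak{N}\subseteq\mathfrak{F}$ to ensure $F(p)\neq\emptyset$). If $N$ is elementary abelian of exponent $p$, then $N\in\mathfrak{N}\subseteq\mathfrak{F}$, so $N$ is $\mathfrak{F}$-central in $G$ and Lemma \ref{l1} yields $G/C_G(N)\in F(p)$. Since $H\trianglelefteq G$, the image $H/C_H(N)\cong HC_G(N)/C_G(N)$ is a normal subgroup of $G/C_G(N)$, so it lies in $F(p)$. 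Any chief factor $A/B$ of $H$ with $B<A\leq N$ is an abelian $p$-section centralized by $C_H(N)$, whence $H/C_H(A/B)$ is a quotient of an $F(p)$-group and $A/B$ is $\mathfrak{F}$-central in $H$ by Lemma \ref{l1}.

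If $N$ is non-abelian, I split on whether $N\in\mathfrak{F}$. If $N\in\mathfrak{F}$, then $N$ is $\mathfrak{F}$-central in $G$ and Lemma \ref{l1} gives $G/C_G(N)\in\mathfrak{F}$; the same normal-heredity argument, now for $\mathfrak{F}$ itself, gives $H/C_H(N)\in\mathfrak{F}$. A subgroup of $N$ normal in $H$ is in particular normal in $N$, so by Lemma \ref{ls5} it is a direct product of some of the simple non-abelian direct factors of $N$; hence any chief factor $A/B$ of $H$ with $A\leq N$ has this form. Thus $A\trianglelefteq N\in\mathfrak{F}$ gives $A\in\mathfrak{F}$ and so $A/B\in\mathfrak{F}$, while $H/C_H(A/B)$ is a quotient of $H/C_H(N)\in\mathfrak{F}$, making $A/B$ $\mathfrak{F}$-central in $H$. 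If instead $N\not\in\mathfrak{F}$, the $\mathfrak{J}cs$-$\mathfrak{F}$-condition on $G$ forces $N$ to be a simple $\mathfrak{J}$-group, hence a chief factor of $H$ of the required type. In every case all chief factors of $H$ satisfy the $\mathfrak{J}cs$-$\mathfrak{F}$-condition, so $H\in\mathfrak{F}_{\mathfrak{J}cs}$, a contradiction.

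The main obstacle I expect is the non-abelian case: one must invoke Lemma \ref{ls5} to pin down that chief factors of $H$ refining $1<N$ are subproducts of the simple direct factors of $N$, and then balance two normal-heredity conditions, that of $F(p)$ via Lemma \ref{lc0} (for abelian $N$) and that of $\mathfrak{F}$ itself (for non-abelian $N$), together with the formation property for quotients to transfer $\mathfrak{F}$-centrality from $N$ down to its chief factor refinements.
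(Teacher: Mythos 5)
Your proof is correct and follows essentially the same route as the paper: the core of both arguments is the case split on a chief factor of $G$ being (i) outside $\mathfrak{F}$ (hence a simple $\mathfrak{J}$-group that cannot be refined), (ii) abelian in $\mathfrak{F}$ (push $\mathfrak{F}$-centrality down via normal heredity of $F(p)$ from Lemma \ref{lc0}), or (iii) non-abelian in $\mathfrak{F}$ (same with normal heredity of $\mathfrak{F}$ itself and Lemma \ref{ls5}), together with Lemma \ref{l1} and the inclusion of centralizers. The only difference is organizational: you run a minimal-counterexample induction that isolates the bottom chief factor, whereas the paper directly refines an entire chief series of $G$ below the normal subgroup and treats every layer at once.
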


 \begin{proof} Let  $F$ be the canonical composition definition of $\mathfrak{F}$, $G$ be an $\mathfrak{F}$-group and     $1=N_0\trianglelefteq N_1\trianglelefteq\dots\trianglelefteq N_n=N\trianglelefteq G$ be the part of chief series of $G$ below $N$. Let $H/K$ be a chief factor of $N$ such that
 $N_{i-1}\leq K\leq H\leq N_i$ for some $i$.

 If $N_i/N_{i-1}\not\in \mathfrak{F}$, then it is a simple $\mathfrak{J}$-group. So $N_{i-1}=K$, $N_i=H$ and $H/K$ is a simple $\mathfrak{J}$-group.

If $N_i/N_{i-1}\in \mathfrak{F}$, then it is $\mathfrak{F}$-central in $G$.   Note that $H/K\in\mathfrak{F}$.

 Assume that $N_i/N_{i-1}$ is abelian.  Then $G/C_G(N_i/N_{i-1})\in F(p)$ for some $p$  by Lemma \ref{l1}. Note that $F(p)$ is a normally hereditary formation by Lemma \ref{lc0}. Since $$NC_G(N_i/N_{i-1})/C_G(N_i/N_{i-1})\trianglelefteq G/C_G(N_i/N_{i-1}),$$ we see that $$NC_G(N_i/N_{i-1})/C_G(N_i/N_{i-1})\simeq N/C_N(N_i/N_{i-1})\in F(p).$$   From $C_N(N_i/N_{i-1})\leq C_N(H/K)$ it follows that $N/C_N(H/K)$ is a quotient group of \linebreak $N/C_N(N_i/N_{i-1})$. Thus $N/C_N(H/K)\in F(p)$.
Now $H/K$ is an $\mathfrak{F}$-central chief factor of $N$ by Lemma \ref{l1}.

Assume that $N_i/N_{i-1}$ is non-abelian. Then $G/C_G(N_i/N_{i-1})\in\mathfrak{F}$ by Lemma \ref{l1}. Hence $NC_G(N_i/N_{i-1})/C_G(N_i/N_{i-1})\in\mathfrak{F}$. By analogy $N/C_N(H/K)\in \mathfrak{F}$. So $H/K$  is an $\mathfrak{F}$-central chief factor of $N$ by Lemma \ref{l1}.

  Thus every chief $\mathfrak{F}$-factor of $N$ is $\mathfrak{F}$-central and  other chief factors of $N$ are simple $\mathfrak{J}$-groups by Jordan-H\"{o}lder theorem. Thus $N\in\mathfrak{F}_{\mathfrak{J}cs}$.\end{proof}

\begin{proposition}\label{jf}
  Let $\mathfrak{N}\subseteq\mathfrak{F}$ be a   composition  Fitting formation.   Then $\mathfrak{F}_{\mathfrak{J}cs}$ is a    Fitting formation.
\end{proposition}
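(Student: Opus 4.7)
The plan is to establish the remaining half of the Fitting property: Proposition \ref{jsn} already delivers normal heredity of $\mathfrak{F}_{\mathfrak{J}cs}$, so it suffices to verify that whenever $G=AB$ with $A,B\trianglelefteq G$ and $A,B\in\mathfrak{F}_{\mathfrak{J}cs}$, every chief factor of $G$ is either $\mathfrak{F}$-central or a simple $\mathfrak{J}$-group. Let $F$ denote the canonical composition definition of $\mathfrak{F}$; by Lemma \ref{lc0} each non-empty $F(p)$ is itself a Fitting formation, while Proposition \ref{pj1} identifies $F_{\mathfrak{J}cs}(p)=F(p)$. Fix a chief factor $H/K$ of $G$; replacing $G$ by $G/K$, $A$ by $AK/K$ and $B$ by $BK/K$ (all still in $\mathfrak{F}_{\mathfrak{J}cs}$ with product $G/K$), one may assume $K=1$ and $H$ is a minimal normal subgroup of $G$. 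Since $A,B\trianglelefteq G$, both $A\cap H$ and $B\cap H$ are $G$-invariant subgroups of $H$ and therefore lie in $\{1,H\}$; the relation $[A,H]\leq A\cap H$ turns $A\cap H=1$ into $A\leq C_G(H)$, and similarly for $B$.

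First I would dispose of the case when $H$ is non-abelian with $H\notin\mathfrak{F}$. Write $H=N_1\times\cdots\times N_r$ with the $N_i$ pairwise isomorphic simple non-abelian groups (Lemma \ref{l3}). Normal heredity of $\mathfrak{F}$ forces each $N_i\notin\mathfrak{F}$. If $H\leq A$, Lemma \ref{ls5} identifies the minimal $A$-normal subgroups contained in $H$ with the products of $N_i$'s over $A$-orbits; each such product sits outside $\mathfrak{F}$, so $A\in\mathfrak{F}_{\mathfrak{J}cs}$ forces it to be a simple $\mathfrak{J}$-group, whence every $A$-orbit is a singleton and every $N_i\in\mathfrak{J}$. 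The same holds for $B$ when $H\leq B$; when $A\cap H=1$ one has $A\leq C_G(H)$, so $A$ again fixes each $N_i$. Either way $G=AB$ fixes every $N_i$; but $G$ permutes $\{N_1,\dots,N_r\}$ transitively because $H$ is $G$-minimal, so $r=1$ and $H=N_1\in\mathfrak{J}$ as required.

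Now assume $H\in\mathfrak{F}$; the goal becomes $G/C_G(H)\in F(p)$ when $H$ is an abelian $p$-group, and $G/C_G(H)\in\mathfrak{F}$ in the non-abelian case. The essential observation is
\[
G/C_G(H)=\bigl(AC_G(H)/C_G(H)\bigr)\bigl(BC_G(H)/C_G(H)\bigr),
\]
a product of two normal subgroups isomorphic to $A/C_A(H)$ and $B/C_B(H)$; invoking the Fitting property of $F(p)$ (Lemma \ref{lc0}) or of $\mathfrak{F}$ itself reduces the task to checking that these two quotients lie in the appropriate formation, with the obvious collapse to the non-trivial factor when one of $A\cap H$, $B\cap H$ equals $1$. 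For non-abelian $H\in\mathfrak{F}$ with $H\leq A$, the $A$-orbit decomposition of $H$ supplies minimal $A$-normal $\mathfrak{F}$-subgroups $H_j$, each $\mathfrak{F}$-central in $A$ by Lemma \ref{l1}, so $A/C_A(H)$ is a subdirect product of the $A/C_A(H_j)\in\mathfrak{F}$ and hence in $\mathfrak{F}$ by $R_0$-closure. For abelian $H$ I would refine $H$ to an $A$-chief series $1=L_0<\cdots<L_n=H$; every step is an abelian $p$-group, hence in $\mathfrak{F}$, hence $\mathfrak{F}$-central in $A$, so $A/C_A(L_{i+1}/L_i)\in F(p)$ (Lemma \ref{l1}), and the joint centralizer modulo $C_A(H)$ stabilizes a flag in the abelian $p$-group $H$ and is therefore a $p$-group, giving $A/C_A(H)\in\mathfrak{N}_p F(p)=F(p)$.

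The main obstacle is this final step: assembling the separate local data on $A$ and $B$ into a single statement about $G/C_G(H)$. Precisely here the Fitting hypothesis on $\mathfrak{F}$—and, via Lemma \ref{lc0}, the Fitting property of every local factor $F(p)$—is indispensable, since without closure under products of normal subgroups the $A$- and $B$-pieces could not be fused. The hypothesis $\mathfrak{N}\subseteq\mathfrak{F}$ enters exactly through the identity $F(p)=\mathfrak{N}_p F(p)$ used in the stability-group argument of the abelian case.
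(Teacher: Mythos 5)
Your proof is correct and takes essentially the same route as the paper's: reduce to a minimal normal subgroup $H$ of $G=AB$, split into the cases $H\in\mathfrak{F}$ abelian, $H\in\mathfrak{F}$ non-abelian and $H\notin\mathfrak{F}$, fuse the $A$- and $B$-data on $G/C_G(H)$ via the Fitting property of $F(p)$ and of $\mathfrak{F}$ (Lemma \ref{lc0}), and use the component structure of $H$ (Lemmas \ref{l3} and \ref{ls5}) to force $H$ to be a simple $\mathfrak{J}$-group in the last case (your explicit treatment of the subcase $A\cap H=1$ is a small improvement on the paper's implicit handling). One small correction: the implication ``$H\notin\mathfrak{F}$, hence each $N_i\notin\mathfrak{F}$'' follows from $R_0$-closure of the formation $\mathfrak{F}$ (if every $N_i$ lay in $\mathfrak{F}$, then $H$, being a direct product of the $N_i$, would lie in $R_0\mathfrak{F}=\mathfrak{F}$), not from normal heredity, which runs in the opposite direction.
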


 \begin{proof} Let  $F$ be the canonical composition definition of $\mathfrak{F}$. By Proposition \ref{jsn}, $\mathfrak{F}_{\mathfrak{J}cs}$ is normally hereditary. Let show that $\mathfrak{F}_{\mathfrak{J}cs}$ contains every group $G=AB$, where $A$ and $B$ are normal $\mathfrak{F}_{\mathfrak{J}cs}$-subgroups of $G$. Assume the contrary. Let $G$ be a minimal order counterexample. Note that $F(p)$ is a Fitting formation for all $p\in\mathbb{P}$ by Lemma \ref{lc0}.

If $N$ is a minimal normal subgroup of $G$, then $G/N=(AN/N)(BN/N)$   is the product of two normal $\mathfrak{F}_{\mathfrak{J}cs}$-subgroups. Hence $G/N\in\mathfrak{F}_{\mathfrak{J}cs}$ by our assumption. Since $\mathfrak{F}_{\mathfrak{J}cs}$ is a formation, we see that $N$ is an unique minimal normal subgroup of $G$.

$(a)$ \emph{If $N\in\mathfrak{F}$ and abelian, then   $A/C_A(N)\in F(p)$ and $B/C_B(N)\in F(p)$}.

Note that $N$ is a $p$-group and   $N\in\mathfrak{F}$. Let $1=N_0\trianglelefteq N_1\trianglelefteq\dots\trianglelefteq N_n=H$ be a part of a chief series of $A$. Note that $N_i/N_{i-1}$ is an $\mathfrak{F}$-central chief factor of $A$ for all $i=1,\dots, n$. So $A/C_A(N_i/N_{i-1})\in F(p)$ by Lemma \ref{l1} for all $i=1,\dots, n$.
Therefore $A/C_A(N)\in\mathfrak{N}_pF(p)=F(p)$ by \cite[Lemma 1]{j3}. By analogy $B/C_B(N)\in F(p)$.

$(b)$ \emph{If $N\in\mathfrak{F}$ and non-abelian, then   $A/C_A(N)\in \mathfrak{F}$ and $B/C_B(N)\in \mathfrak{F}$ for all $p\in\pi(N)$}.

Assume that $N\in\mathfrak{F}$ is non-abelian.  Then
 it is a direct product of minimal normal $\mathfrak{F}$-subgroups $N_i$  of $A$  by Lemma \ref{l3}.  Hence $N_i$ is $\mathfrak{F}$-central in $A$ for all $i=1,\dots, n$ by Lemma \ref{lz}. So $A/C_A(N_i)\in \mathfrak{F}$   by Lemma \ref{l1}. Note that $C_A(N)=\cap_{i=1}^n C_A(N_i)$. Since $\mathfrak{F}$ is a formation,   $A/\cap_{i=1}^n C_A(N_i)= A/C_A(N)\in \mathfrak{F}$. By analogy $B/C_B(N)\in \mathfrak{F}$.

 $(c)$ $N\not\in\mathfrak{F}$.

 Assume that $N\in\mathfrak{F}$.
 Note that $$AC_G(N)/C_G(N)\simeq A/C_A(N),\, BC_G(N)/C_G(N)\simeq B/C_B(N)\,\,\textrm{and} $$ $$ G/C_G(N)=(AC_G(N)/C_G(N))(BC_G(N)/C_G(N)).$$
Since $F(p)$ and $\mathfrak{F}$ are   Fitting formations, we see that if $N$ is abelian, then $G/C_G(N)\in F(p)$, and  if $N$ is non-abelian, then $G/C_G(N)\in\mathfrak{F}$.  Thus $N$ is $\mathfrak{F}$-central chief factor of $G$ by Lemma \ref{l1}. From $G/N\in\mathfrak{F}_{\mathfrak{J}cs}$ it follows that $G\in\mathfrak{F}_{\mathfrak{J}cs}$,   a contradiction.

$(d)$ \emph{The final contradiction.}

Since  $N\not\in\mathfrak{F}$ and $A\in\mathfrak{F}_{\mathfrak{J}cs}$, $N$    is a direct product of minimal normal subgroups of $A$ and these subgroups are simple $\mathfrak{J}$-groups. Hence every subnormal subgroup of $N$ is normal in $A$ by Lemma \ref{ls5}. By analogy every subnormal subgroup of $N$ is normal in $B$. Since $N$ is a minimal normal subgroup of $G=AB$, we see that $N$ is a simple $\mathfrak{J}$-group. From $G/N\in\mathfrak{F}_{\mathfrak{J}cs}$ it follows that $G\in\mathfrak{F}_{\mathfrak{J}cs}$, the final contradiction.
\end{proof}

 \begin{proposition}
  Let $\mathfrak{N}\subseteq\mathfrak{F}$ be a   composition formation. Then $\mathfrak{F}_{\mathfrak{J}cs}$ is saturated if and only if $\mathfrak{F}$ is saturated and $\mathfrak{J}\subseteq \mathfrak{F}$.
 \end{proposition}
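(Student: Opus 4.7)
By Proposition \ref{pj1} and Theorem \ref{t4}, the greatest saturated subformations $(\mathfrak{F}_{\mathfrak{J}cs})_l$ and $\mathfrak{F}_l$ are both defined by the common canonical local function $f(p)=F(p)$, so they coincide. In particular $\mathfrak{F}_l\subseteq \mathfrak{F}_{\mathfrak{J}cs}$, and ``$\mathfrak{F}_{\mathfrak{J}cs}$ is saturated'' is equivalent to the equality $\mathfrak{F}_{\mathfrak{J}cs}=\mathfrak{F}_l$, while ``$\mathfrak{F}$ is saturated'' is equivalent to $\mathfrak{F}=\mathfrak{F}_l$. My strategy is to reduce both saturation conditions, together with $\mathfrak{J}\subseteq \mathfrak{F}$, to the single identity $\mathfrak{F}=\mathfrak{F}_{\mathfrak{J}cs}=\mathfrak{F}_l$.

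For the sufficiency $(\Leftarrow)$, suppose $\mathfrak{F}$ is saturated and $\mathfrak{J}\subseteq \mathfrak{F}$. Given $G\in \mathfrak{F}_{\mathfrak{J}cs}$, the hypothesis $\mathfrak{J}\subseteq \mathfrak{F}$ forces every simple-$\mathfrak{J}$-group chief factor to be an $\mathfrak{F}$-factor, hence $\mathfrak{F}$-central by the definition of $\mathfrak{F}_{\mathfrak{J}cs}$; Lemma \ref{lz} yields $G\in \mathfrak{F}$. Combined with $\mathfrak{F}=\mathfrak{F}_l\subseteq \mathfrak{F}_{\mathfrak{J}cs}$, this gives $\mathfrak{F}=\mathfrak{F}_{\mathfrak{J}cs}=\mathfrak{F}_l$, so $\mathfrak{F}_{\mathfrak{J}cs}$ is saturated.

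For the necessity $(\Rightarrow)$, suppose $\mathfrak{F}_{\mathfrak{J}cs}=\mathfrak{F}_l$. Then $\mathfrak{J}\subseteq \mathfrak{F}_{\mathfrak{J}cs}=\mathfrak{F}_l\subseteq \mathfrak{F}$ immediately delivers $\mathfrak{J}\subseteq \mathfrak{F}$. To obtain $\mathfrak{F}=\mathfrak{F}_l$, I would pick $G\in \mathfrak{F}\setminus \mathfrak{F}_{\mathfrak{J}cs}$ of minimal order: its unique minimal normal subgroup $N$ must be non-abelian (an abelian $N\in \mathfrak{N}\subseteq \mathfrak{F}$ would automatically be an $\mathfrak{F}$-central $\mathfrak{F}$-factor), so $N\cong S^n$ with $S$ a non-abelian simple group satisfying $S\notin \mathfrak{F}$ (else $N$ again qualifies); the hypothesis $\mathfrak{J}\subseteq \mathfrak{F}$ then forces $S\notin \mathfrak{J}$, so $N$ is neither an $\mathfrak{F}$-factor nor a simple $\mathfrak{J}$-group. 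A secondary minimality argument applied to $G/C_G(N)\in \mathfrak{F}$ forces $C_G(N)=1$, and hence $\Phi(G)\cap N=1$, $\Phi(G)\leq C_G(N)=1$.

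The main obstacle is to derive the final contradiction at this almost-simple configuration. My plan is to construct a Frattini cover $T$ of $G$ by means of an irreducible Frattini $G$-module $V$ over $\mathbb{F}_p$ for a suitable prime $p\in \pi(S)$, so that $V\leq \Phi(T)$ and $T/\Phi(T)$ is a proper quotient of $G$; by the minimality of $G$, $T/\Phi(T)$ lies in $\mathfrak{F}_{\mathfrak{J}cs}$, while $T$ itself is not in $\mathfrak{F}_{\mathfrak{J}cs}=\mathfrak{F}_l$ because the abelian chief factor $V\leq \Phi(T)$ can be arranged to fail $F(p)$-centrality in $T$ via the non-trivial action of the socle $N$. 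This directly contradicts the saturation of $\mathfrak{F}_{\mathfrak{J}cs}$. Exhibiting such a Frattini module $V$ in the presence of the chief factor $S^n$ with $S\notin \mathfrak{F}\cup \mathfrak{J}$ is what I expect to be the most delicate portion of the proof.
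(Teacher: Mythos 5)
Your sufficiency direction and your derivation of $\mathfrak{J}\subseteq\mathfrak{F}$ are sound, and the latter is in fact slicker than the paper's: you read $\mathfrak{J}\subseteq\mathfrak{F}_{\mathfrak{J}cs}=(\mathfrak{F}_{\mathfrak{J}cs})_l=\mathfrak{F}_l\subseteq\mathfrak{F}$ straight off Proposition~\ref{p6}, whereas the paper extracts the same fact from a Frattini-extension computation. The genuine gap is in the last part of the necessity direction, and it is not a missing detail but a structurally broken plan. In your critical configuration you have $G\in\mathfrak{F}\setminus\mathfrak{F}_{\mathfrak{J}cs}$ with $\Phi(G)=1$, and you propose a Frattini cover $T$ with $V\leq\Phi(T)$ and $T/V\simeq G$. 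But then $\Phi(T)V/V\leq\Phi(T/V)=\Phi(G)=1$, so $\Phi(T)\leq V$ and $T/\Phi(T)$ maps \emph{onto} $G$ (indeed $T/\Phi(T)\simeq G$ in the Gasch\"{u}tz construction); it is not a proper quotient of $G$. Consequently $T/\Phi(T)\notin\mathfrak{F}_{\mathfrak{J}cs}$ --- it inherits the offending chief factor $S^n$ from $G$ --- and the saturation of $\mathfrak{F}_{\mathfrak{J}cs}$ tells you nothing about $T$. To contradict saturation you need a group $T\notin\mathfrak{F}_{\mathfrak{J}cs}$ whose Frattini quotient \emph{is} in $\mathfrak{F}_{\mathfrak{J}cs}$, and a minimal counterexample in $\mathfrak{F}\setminus\mathfrak{F}_{\mathfrak{J}cs}$ is exactly the wrong base group to feed into that machine.

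The paper sidesteps this trap by choosing the base of the Frattini extension to be a simple $\mathfrak{J}$-group $G\notin\mathfrak{F}$, which \emph{does} lie in $\mathfrak{F}_{\mathfrak{J}cs}$: the Griess--Schmid faithful Frattini module and Gasch\"{u}tz's theorem give $R$ with $R/\Phi(R)\simeq G\in\mathfrak{F}_{\mathfrak{J}cs}$, saturation forces $R\in\mathfrak{F}_{\mathfrak{J}cs}$, and the abelian chief factors inside $\Phi(R)$ then yield $G\simeq R/\mathrm{O}_{p',p}(R)\in F(p)\subseteq\mathfrak{F}$, a contradiction. In other words, the paper proves necessity by establishing $\mathfrak{F}_{\mathfrak{J}cs}=\mathfrak{F}$ outright (``if $\mathfrak{F}_{\mathfrak{J}cs}\neq\mathfrak{F}$, some simple $\mathfrak{J}$-group lies outside $\mathfrak{F}$, and then $\mathfrak{F}_{\mathfrak{J}cs}$ is not saturated''), rather than attacking $\mathfrak{F}\neq\mathfrak{F}_l$ head-on with a minimal counterexample. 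Your instinct that the inclusion $\mathfrak{F}\subseteq\mathfrak{F}_{\mathfrak{J}cs}$ still needs an argument is legitimate --- the paper passes over it rather quickly --- but the tool you reach for cannot close it: you must either adopt the paper's reduction to a simple $\mathfrak{J}$-group outside $\mathfrak{F}$, or find a different mechanism for the configuration with socle $S^n$, $S\notin\mathfrak{F}\cup\mathfrak{J}$; the Frattini cover of $G$ itself cannot do the job.
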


 \begin{proof}
 It is clear that if $\mathfrak{F}$ is saturated and $\mathfrak{J}\subseteq \mathfrak{F}$, then $\mathfrak{F}_{\mathfrak{J}cs}=\mathfrak{F}$ is saturated. Assume that $\mathfrak{F}_{\mathfrak{J}cs}\neq\mathfrak{F}$, then there is a simple $\mathfrak{J}$-group $G\not\in\mathfrak{F}$. Note that $G\in\mathfrak{F}_{\mathfrak{J}cs}$.  Let $p\in\pi(G)$. According to \cite{20}, there is a
 Frattini $\mathbb{F}_pG$-module $A$ which is faithful for $G$. By known Gasch\"{u}tz
 theorem \cite{41}, there
exists a Frattini extension  $A\rightarrowtail R\twoheadrightarrow G$
such that $A\stackrel {G}{\simeq} \Phi(R)$ and $R/\Phi(R)\simeq G$.
If $\mathfrak{F}_{\mathfrak{J}cs}$ is saturated, then $R\in\mathfrak{F}_{\mathfrak{J}cs}$. Hence $R/\mathrm{O}_{p', p}(R)\simeq G\in F(p)\subseteq \mathfrak{F}$, the contradiction.
\end{proof}


  \begin{proposition}\label{p6}
    Let $\mathfrak{N}\subseteq\mathfrak{F}$ be a   composition formation. Then $(\mathfrak{F}_{\mathfrak{J}cs})_l=\mathfrak{F}_l$.
  \end{proposition}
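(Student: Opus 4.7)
The plan is to combine Proposition \ref{pj1}, which pins down the canonical composition definition of $\mathfrak{F}_{\mathfrak{J}cs}$, with Theorem \ref{t4}, which reads off the greatest saturated subformation of a solubly saturated formation directly from its canonical composition definition. So the proof should be essentially a one-line computation once the right objects are named.

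First I would invoke Proposition \ref{pj1} to get that $\mathfrak{F}_{\mathfrak{J}cs}$ is itself a composition formation and that its canonical composition definition $F_{\mathfrak{J}cs}$ satisfies $F_{\mathfrak{J}cs}(p)=F(p)$ for every prime $p$, where $F$ is the canonical composition definition of $\mathfrak{F}$. I would also remark that $\mathfrak{F}_{\mathfrak{J}cs}$ is non-empty; for instance, $\mathfrak{N}\subseteq\mathfrak{F}\subseteq\mathfrak{F}_{\mathfrak{J}cs}$ because every chief factor of a nilpotent group $G$ is central, hence $(H/K)\rtimes G/C_G(H/K)=H/K$ lies in $\mathfrak{N}\subseteq\mathfrak{F}$.

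Next, by Theorem \ref{t4} applied to $\mathfrak{F}$, the local formation $\mathfrak{F}_l$ has local definition $p\mapsto F(p)$. Applying the same theorem to the solubly saturated formation $\mathfrak{F}_{\mathfrak{J}cs}$ with its canonical composition definition $F_{\mathfrak{J}cs}$, the local formation $(\mathfrak{F}_{\mathfrak{J}cs})_l$ has local definition $p\mapsto F_{\mathfrak{J}cs}(p)=F(p)$. Since two local formations with identical local definitions coincide (by the Gasch\"utz--Lubeseder--Schmid theorem and uniqueness of the local definition generated by a formation function), we conclude $(\mathfrak{F}_{\mathfrak{J}cs})_l=\mathfrak{F}_l$.

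I do not foresee a real obstacle here: the substantive content was already absorbed into Proposition \ref{pj1}, and the statement of Theorem \ref{t4} is exactly tailored to this kind of transfer argument. The only minor verification is that Theorem \ref{t4} is applicable, i.e.\ that $\mathfrak{F}_{\mathfrak{J}cs}$ is non-empty and solubly saturated, both of which are already established.
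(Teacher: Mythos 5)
Your argument is essentially identical to the paper's proof: both read off the local definitions of $(\mathfrak{F}_{\mathfrak{J}cs})_l$ and $\mathfrak{F}_l$ via Theorem \ref{t4} and observe that they coincide because $F_{\mathfrak{J}cs}(p)=F(p)$ for all $p$ by Proposition \ref{pj1}. The extra remarks on non-emptiness and applicability are harmless additions; the proof is correct.
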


\begin{proof}
   By Theorem \ref{t4}, $(\mathfrak{F}_{\mathfrak{J}cs})_l$ is defined by $f(p)=F_{\mathfrak{J}cs}(p)$ for all $p\in\mathbb{P}$ and $\mathfrak{F}_l$ is defined by $h(p)=F(p)$ for all $p\in\mathbb{P}$. Since $F_{\mathfrak{J}cs}(p)=F(p)$ for all $p\in\mathbb{P}$ by Proposition \ref{pj1},  $(\mathfrak{F}_{\mathfrak{J}cs})_l=\mathfrak{F}_l$.
\end{proof}

\subsection{Proves of Theorems \ref{T1} and \ref{T2}}

\begin{proof}[Proof of Theorem \ref{T1}]
Recall that a formation is  solubly saturated if and only if it is composition. Now Theorem \ref{T1} directly follows from Propositions \ref{pj1}, \ref{jsn} and \ref{jf}.
\end{proof}

\begin{proof}[Proof of Theorem \ref{T2}]
$(1)\Rightarrow(2)$ Let $G\in\mathfrak{F}_{\mathfrak{J}cs}$, $Z=\mathrm{Z}_\mathfrak{F}(G)$ and $S/Z=\mathrm{Soc}(G/Z)$.   Note that $G/Z$ does not have minimal normal   $\mathfrak{F}$-subgroups. Hence every minimal normal subgroup $S_i/Z$  of $G/Z$ $(i=1,\dots, n)$ is a simple $\mathfrak{J}$-group. Now $S/Z=S_1/Z\times\dots\times S_n/Z$. Since all groups in $\mathfrak{J}$ are non-abelian, $\Phi(G/Z)\simeq 1$. Hence $S/Z=\tilde{\mathrm{F}}(G/Z)$.

Note that every element $xZ$ induces an automorphism $\alpha_{x, i}$ on $S_i/Z$ for $i=1,\dots, n$. Let $$\varphi: xZ\to(\alpha_{x, 1},\dots, \alpha_{x, n})$$
It is clear that $\varphi(xZ)\varphi(yZ)=\varphi(xyZ)$. Also note that if $\varphi(xZ)=\varphi(yZ)$, then $y^{-1}xZ$ acts trivially on every $S_i/Z$. So
$$y^{-1}xZ\in \cap_{i=1}^nC_{G/Z}(S_i/Z)=C_{G/Z}(S/Z)=C_{G/Z}(\tilde{\mathrm{F}}(G/Z))\subseteq \tilde{\mathrm{F}}(G/Z). $$
Hence $y^{-1}xZ=1Z$. Now $yZ=xZ$ and $\varphi$ is injective. Hence $\varphi$ is the monomorphism from $G/Z$ to $\mathrm{Aut}(S_1/Z)\times\dots\times\mathrm{Aut}(S_n/Z)$.  Note that $\varphi(S/Z)=\mathrm{Inn}(S_1/Z)\times\dots\times\mathrm{Inn}(S_n/Z)$. It is straightforward to check that
$$(\mathrm{Aut}(S_1/Z)\times\dots\times\mathrm{Aut}(S_n/Z))/(\mathrm{Inn}(S_1/Z)\times\dots\times\mathrm{Inn}(S_n/Z))
\simeq\mathrm{Out}(S_1/Z)\times\dots\times\mathrm{Out}(S_n/Z)$$
Now $G/S\simeq(G/Z)/(S/Z)\simeq\varphi(G/Z)/\varphi(S/Z) $ can be viewed as subgroup of    $\mathrm{Out}(S_1/Z)\times\dots\times\mathrm{Out}(S_n/Z)$. By Schreier conjecture all $\mathrm{Out}(S_i/Z)$ are soluble. Hence every chief factor of $G$ above $S$ is soluble and, hence, $\mathfrak{F}$-central in $G$.

  $(2), (3)\Rightarrow(1)$ Assume that $G^\mathfrak{F}=G^{E\mathfrak{F}}$,  $\mathrm{Z}(G^\mathfrak{F})\leq\mathrm{Z}_\mathfrak{F}(G)$ and $G^\mathfrak{F}/\mathrm{Z}(G^\mathfrak{F})$ is a direct product of $G$-invariant simple $\mathfrak{J}$-groups or $S/\mathrm{Z}_\mathfrak{F}(G)=\mathrm{Soc}(G/\mathrm{Z}_\mathfrak{F}(G))$   is a direct product of $G$-invariant  simple  $\mathfrak{J}$-groups   and $G/S$ is a soluble $\mathfrak{F}$-group. Now it is clear that a group $G$   has a chief series such that its every chief $\mathfrak{F}$-factor is $\mathfrak{F}$-central in $G$ and other its chief factors a simple $\mathfrak{J}$-groups. Thus $G\in\mathfrak{F}_{\mathfrak{J}cs}$  by Jordan-H\"{o}lder theorem.

   $(1)\Rightarrow(3)$ Assume now that $G\in\mathfrak{F}_{\mathfrak{J}cs}$. Note that every chief factor of $G$ above $G^{E\mathfrak{F}}$ is an $\mathfrak{F}$-group and hence $\mathfrak{F}$-central in $G$ by the definition of $\mathfrak{F}_{\mathfrak{J}cs}$. So $\mathrm{Z}_\mathfrak{F}(G/G^{E\mathfrak{F}})=G/G^{E\mathfrak{F}}$. Therefore $G/G^{E\mathfrak{F}}\in\mathfrak{F}$ and $G^\mathfrak{F}\leq G^{E\mathfrak{F}}$. From $\mathfrak{F}\subseteq{E\mathfrak{F}}$ it follows that $G^{E\mathfrak{F}}\leq G^{\mathfrak{F}}$. Thus $G^\mathfrak{F}= G^{E\mathfrak{F}}$.

By \cite[Corollary 2.3.1]{z}, $G^\mathfrak{F}\leq C_G(\mathrm{Z}_\mathfrak{F}(G))$. Hence $G^\mathfrak{F}\cap\mathrm{Z}_\mathfrak{F}(G)=\mathrm{Z}(G^\mathfrak{F})$. So $\mathrm{Z}(G^\mathfrak{F})\leq\mathrm{Z}_\mathfrak{F}(G)$.

   According to $(2)$, $(G/\mathrm{Z}_\mathfrak{F}(G))^\mathfrak{F}=\mathrm{Soc}(G/\mathrm{Z}_\mathfrak{F}(G))$ is a direct product of simple $\mathfrak{J}$-groups. Note that $G^\mathfrak{F}/\mathrm{Z}(G^\mathfrak{F})\simeq G^\mathfrak{F}\mathrm{Z}_\mathfrak{F}(G)/\mathrm{Z}_\mathfrak{F}(G)=(G/\mathrm{Z}_\mathfrak{F}(G))^\mathfrak{F}$.
   Now $G^\mathfrak{F}/\mathrm{Z}(G^\mathfrak{F})$ is a direct product of $G$-invariant simple $\mathfrak{J}$-groups  by Lemma \ref{l3} and Definition \ref{def1}.
    \end{proof}

\section{On the intersection of $\mathfrak{F}_{\mathfrak{J}cs}$-maximal subgroups}

\subsection{Basic results}

It is natural to ask if the solution of   Question \ref{q1} can be reduced to the case, when $\mathfrak{X}$ is a saturated formation?  That
is why A.\,F. Vasil'ev suggested the following question on Gomel Algebraic seminar in 2015:

\begin{pr}\label{Vas}
$(1)$ Let $\mathfrak{H}$ be a normally hereditary saturated formation. Assume that
$\mathrm{Int}_\mathfrak{H}(G)=\mathrm{Z}_\mathfrak{H}(G)$ holds for every group $G$. Describe all
normally hereditary solubly saturated formations $\mathfrak{F}$ with
$\mathfrak{F}_l=\mathfrak{H}$\! such that
\!$\mathrm{Int}_\mathfrak{F}(G)=\mathrm{Z}_\mathfrak{F}(G)$ holds for every group $G$.

$(2)$ Let $\mathfrak{F}$ be a normally hereditary solubly saturated formation. Assume that
$\mathrm{Int}_\mathfrak{F}(G)=\mathrm{Z}_\mathfrak{F}(G)$ holds for every group $G$. Does
$\mathrm{Int}_{\mathfrak{F}_l}(G)=\mathrm{Z}_{\mathfrak{F}_l}(G)$ hold for every group $G$?
 \end{pr}

Recall that $D_0\mathfrak{X}$ be the class of groups which are the direct products $\mathfrak{X}$-groups. Partial answer on this question is given in

  \begin{theorem}\label{t3}
Let  $F$ be the canonical composition definition of   a non-empty solubly saturated formation $\mathfrak{F}$. Assume that  $F(p)\subseteq\mathfrak{F}_l$ for all $p\in\mathbb{P}$ and   $\mathfrak{F}_l$ is hereditary.

$(1)$ Assume that $\mathrm{Int}_{\mathfrak{F}_l}(G)=\mathrm{Z}_{\mathfrak{F}_l}(G)$ holds for every group $G$. Let \vspace{-4mm}
$$\mathfrak{H}=(S\textrm{ is a simple group }| \textrm{ if }H/K \textrm{ is } \mathfrak{F}\textrm{-central chief }  D_0(S)\textrm{-factor of } G, \textrm{then } H/K \textrm{ is } \mathfrak{F}_l\textrm{-central}). \vspace{-4mm}$$
Then every   chief $D_0\mathfrak{H}$-factor of $G$ below    $\mathrm{Int}_{\mathfrak{F}}(G)$ is $\mathfrak{F}_l$-central in $G$.

$(2)$ If $\mathrm{Int}_\mathfrak{F}(G)=\mathrm{Z}_\mathfrak{F}(G)$ holds for every group $G$, then $\mathrm{Int}_{\mathfrak{F}_l}(G)=\mathrm{Z}_{\mathfrak{F}_l}(G)$ holds for every group $G$.
\end{theorem}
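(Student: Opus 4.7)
The two parts have distinct hypotheses and conclusions, so I would treat them largely independently, attacking Part~(1) first (which isolates the chief-factor mechanism) and then Part~(2).

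For Part~(1), let $H/K$ be a chief $D_0\mathfrak{H}$-factor of $G$ with $H\le\mathrm{Int}_\mathfrak{F}(G)$. Since $H/K$ is characteristically simple, $H/K\in D_0(S)$ for a single simple group $S\in\mathfrak{H}$; by the very definition of $\mathfrak{H}$, it then suffices to prove $H/K$ is $\mathfrak{F}$-central. I would argue this by contradiction. If $H/K$ is not $\mathfrak{F}$-central, then by Lemma~\ref{l1} either $G/C_G(H/K)\notin F(p)$ for some $p\in\pi(H/K)$ (abelian case) or $G/C_G(H/K)\notin\mathfrak{F}$ (non-abelian case). In either case I would build an $\mathfrak{F}$-subgroup $U\le G$ complementing $H/K$, extend $U$ to an $\mathfrak{F}$-maximal subgroup $M$, and conclude $H\not\le M$, contradicting $H\le\mathrm{Int}_\mathfrak{F}(G)$. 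In the abelian case the complement is produced by Gasch\"utz-type complementation; in the non-abelian case I would use the trivial centre of $H/K$ (so that $H\cap C_G(H/K)=K$) and take $U$ inside $C_G(H/K)$ together with an $\mathfrak{F}$-transversal. The hypothesis $\mathrm{Int}_{\mathfrak{F}_l}(G)=\mathrm{Z}_{\mathfrak{F}_l}(G)$ enters when certifying that $U$ can be placed in $\mathfrak{F}$: by Proposition~\ref{p6} and the equality $f(p)=F(p)$, $\mathfrak{F}_l$-central chief factors are $\mathfrak{F}$-central, letting us absorb the rest of the series below $\mathrm{Int}_{\mathfrak{F}_l}(G)$ into $U$.

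For Part~(2), let $I=\mathrm{Int}_{\mathfrak{F}_l}(G)$; the inclusion $\mathrm{Z}_{\mathfrak{F}_l}(G)\le I$ is standard. For the reverse, my plan is to prove $I\le\mathrm{Int}_\mathfrak{F}(G)$, invoke the hypothesis to get $I\le\mathrm{Z}_\mathfrak{F}(G)$, and then upgrade $\mathfrak{F}$-centrality of chief factors below $I$ to $\mathfrak{F}_l$-centrality. For $I\le\mathrm{Int}_\mathfrak{F}(G)$: $I$ is normal in $G$ and lies in $\mathfrak{F}_l\subseteq\mathfrak{F}$ by heredity of $\mathfrak{F}_l$ (Lemma~\ref{l0}); for any $\mathfrak{F}$-maximal $M$, the subgroup $IM$ satisfies $(IM)/I\cong M/(M\cap I)\in\mathfrak{F}$, and I would verify $IM\in\mathfrak{F}$ by checking the canonical composition definition: the abelian $p$-chief factors of $IM$ inside $I$ are controlled via $F(p)\subseteq\mathfrak{F}_l$, Proposition~\ref{p6}, and Theorem~\ref{t4}. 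Maximality of $M$ then forces $IM=M$, so $I\le M$, whence $I\le\mathrm{Int}_\mathfrak{F}(G)=\mathrm{Z}_\mathfrak{F}(G)$. Consequently every chief factor of $G$ below $I$ is $\mathfrak{F}$-central. For abelian chief factors this is already $\mathfrak{F}_l$-central, since $f(p)=F(p)$. For a non-abelian chief factor $H/K$ below $I$, heredity gives $H/K\in\mathfrak{F}_l$; combining this with $\mathfrak{F}$-centrality of $H/K$, with Lemma~\ref{l1}, and with the hereditarity of $F(p)$ (Lemma~\ref{l0}), I would derive $G/C_G(H/K)\in F(p)$ for every $p\in\pi(H/K)$, yielding $\mathfrak{F}_l$-centrality.

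The hard part will be verifying $IM\in\mathfrak{F}$ in Part~(2): this requires a careful chief-factor analysis of $IM$, separating factors coming from $I$ from those projecting into $M$, and makes essential use of $F(p)\subseteq\mathfrak{F}_l$ to control the abelian $p$-chief factors contained in $I$. A secondary obstacle is the non-abelian complement construction in Part~(1), where the structure of $\mathrm{Aut}(H/K)$ (with the Schreier-type solvability of $\mathrm{Out}(S)$) must be leveraged so that the complement lies in $\mathfrak{F}$. Finally, bridging $\mathfrak{F}$-centrality and $\mathfrak{F}_l$-centrality for non-abelian chief factors below $I$ in Part~(2) is delicate; if it cannot be done pointwise, a fallback is induction on $|G|$ together with a minimal-normal reduction to a quotient where the conclusion is already known.
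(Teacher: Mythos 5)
Your plan diverges from the paper's proof in a way that leaves genuine gaps in both parts, and the common root is a missing key tool: Skiba's Theorem A of \cite{h4}, which converts the blanket hypothesis ``$\mathrm{Int}_{\mathfrak{F}_l}(G)=\mathrm{Z}_{\mathfrak{F}_l}(G)$ for every $G$'' into a structural statement about $s$-critical groups for the canonical local definition $f(p)=F(p)$. In Part (1), your reduction ``it suffices to prove $H/K$ is $\mathfrak{F}$-central in $G$'' is logically sound, but the method you propose for that step --- build a complement $U\in\mathfrak{F}$, extend to an $\mathfrak{F}$-maximal $M$, conclude $H\not\le M$ --- is exactly an attempt to prove $\mathrm{Int}_{\mathfrak{F}}(G)\le\mathrm{Z}_{\mathfrak{F}}(G)$, which is false for general hereditary saturated formations (this is the content of \cite[Example 5.17]{h4} cited in the introduction); moreover a chief factor below $\mathrm{Int}_\mathfrak{F}(G)$ need not be complemented, and your stated use of the hypothesis (``certifying that $U$ can be placed in $\mathfrak{F}$'') is not a mechanism. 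The paper instead applies the definition of $\mathfrak{H}$ \emph{inside each $\mathfrak{F}$-maximal subgroup $M$} (not in $G$) to get $M/C_M(H/K)\in F(p)$, deduces that every $\mathfrak{F}_l$-subgroup of $G/C_G(H/K)$ is an $F(p)$-group, and then, if $G/C_G(H/K)\notin F(p)$, takes an $s$-critical-for-$F(p)$ subgroup, which by Skiba's Theorem A must lie in $\mathfrak{F}_l$ and hence in $F(p)$ --- a contradiction. Nothing in your outline supplies this last step.

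In Part (2) your route is also different from the paper's and breaks at the two places you yourself flag. First, ``$IM\in\mathfrak{F}$'' is circular: to control the chief factors of $IM$ lying inside $I=\mathrm{Int}_{\mathfrak{F}_l}(G)$ you would need to know they are $\mathfrak{F}_l$-central in $G$, i.e.\ that $I\le\mathrm{Z}_{\mathfrak{F}_l}(G)$, which is precisely the conclusion being proven (the analogous Proposition \ref{p7} only works for the hypercenter, never for the intersection). Second, the ``upgrade'' for a non-abelian chief factor has no mechanism: $\mathfrak{F}$-centrality gives $G/C_G(H/K)\in\mathfrak{F}$ by Lemma \ref{l1}(1), while $\mathfrak{F}_l$-centrality requires $G/C_G(H/K)\in F(p)$ for all $p\in\pi(H/K)$, and membership in $\mathfrak{F}$ does not imply membership in $F(p)$. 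The paper avoids both issues by arguing contrapositively: if $\mathrm{Int}_{\mathfrak{F}_l}=\mathrm{Z}_{\mathfrak{F}_l}$ fails somewhere, Theorem A of \cite{h4} yields a minimal $s$-critical-for-$f(p)$ group $G\notin\mathfrak{F}_l$, from which one builds either $T=V\rtimes G$ with $V$ a faithful simple $\mathbb{F}_pG$-module or a Frattini extension $R$ with $R/\Phi(R)\simeq G$, and checks directly that $\mathrm{Int}_\mathfrak{F}\neq\mathrm{Z}_\mathfrak{F}$ on that constructed group, contradicting the hypothesis. You would need to either adopt this critical-group strategy or find a genuinely new way to exploit the global hypothesis; as written, the proposal does not close either part.
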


\begin{proof}
 From $F(p)=\mathfrak{N}_pF(p)\subseteq\mathfrak{F}_l$  and Theorem
\ref{t4} it follows that if we restrict $F$ to $\mathbb{P}$, then we obtain the canonical local definition of $\mathfrak{F}_l$.

$(1)$  Let    $H/K$ be a  chief $D_0\mathfrak{H}$-factor of $G$ below $\mathrm{Int}_\mathfrak{F}(G)$.

$(a)$ \emph{If $H/K$ is abelian, then $MC_G(H/K)/C_G(H/K) \in F(p)$ for every $\mathfrak{F}$-maximal subgroup $M$ of $G$.}

If $H/K$ is abelian, then it is an elementary abelian $p$-group for some  $p$ and $H/K\in\mathfrak{F}$. Let $M$ be an $\mathfrak{F}$-maximal subgroup of $G$ and $K=H_0\trianglelefteq H_1\trianglelefteq\dots\trianglelefteq H_n=H$ be a part of a chief series of $M$. Note that $H_i/H_{i-1}$ is an $\mathfrak{F}$-central chief factor of $M$ for all $i=1,\dots, n$. So $M/C_M(H_i/H_{i-1})\in F(p)$ by Lemma \ref{l1} for all $i=1,\dots, n$.
Therefore $M/C_M(H/K)\in\mathfrak{N}_pF(p)=F(p)$ by \cite[Lemma 1]{j3}. Now $$MC_G(H/K)/C_G(H/K)\simeq M/C_M(H/K)\in F(p)$$ for every $\mathfrak{F}$-maximal subgroup $M$ of $G$.

$(b)$ \emph{If $H/K$ is non-abelian, then $MC_G(H/K)/C_G(H/K) \in F(p)$ for every $\mathfrak{F}$-maximal subgroup $M$ of $G$.}

If $H/K$ is non-abelian, then
 it is a direct product of isomorphic non-abelian simple groups. Let $M$ be an $\mathfrak{F}$-maximal subgroup of $G$. By Lemma \ref{l3}, $H/K=H_1/K\times\dots\times H_n/K$ is a direct product of minimal normal subgroups $H_i/K$  of $M/K$.
Now $H_i/K$ is $\mathfrak{F}$-central in $M/K$ for all $i=1,\dots, n$. Hence $H_i/K$ is $\mathfrak{F}_l$-central in $M/K$ for all $i=1,\dots, n$ by the definition of $\mathfrak{H}$.  Therefore $M/C_M(H_i/K)\in F(p)$ for all $p\in\pi(H_i/K)$ by Lemma \ref{l1}. Note that $C_M(H/K)=\cap_{i=1}^n C_M(H_i/K)$. Since $F(p)$ is a formation,    $$M/\cap_{i=1}^n C_M(H_i/K)= M/C_M(H/K)\in F(p)$$ for all $p\in\pi(H/K)$. It means that $MC_G(H/K)/C_G(H/K)\simeq M/C_M(H/K)\in F(p)$ for every $p\in\pi(H/K)$ and every $\mathfrak{F}$-maximal subgroup $M$ of $G$.

$(c)$ \emph{All $\mathfrak{F}_l$-subgroups of $G/C_G(H/K)$ are $F(p)$-groups for all $p\in\pi(H/K)$.}

Let $Q/C_G(H/K)$ be an  $\mathfrak{F}_l$-maximal subgroup of $G/C_G(H/K)$. Then there exists an  $\mathfrak{F}_l$-maximal subgroup $N$ of $G$ with $NC_G(H/K)/C_G(H/K)=Q/C_G(H/K)$ by \cite[1,  5.7]{s5}. From $\mathfrak{F}_l\subseteq\mathfrak{F}$ it follows that there exists an $\mathfrak{F}$-maximal subgroup $L$ of $G$ with $N\leq L$. So  $$Q/C_G(H/K)\leq LC_G(H/K)/C_G(H/K)\in F(p) \textrm{ for all }p\in\pi(H/K)$$ by $(a)$ and $(b)$. Since $F(p)$ is hereditary by Lemma \ref{l0},  $Q/C_G(H/K)\in F(p)$. It means that all $\mathfrak{F}_l$-maximal subgroups of $G/C_G(H/K)$ are $F(p)$-groups. Hence all $\mathfrak{F}_l$-subgroups of $G/C_G(H/K)$ are $F(p)$-groups.

$(d)$ \emph{$H/K$  is $\mathfrak{F}_l$-central in $G$.}

Assume now that $H/K$ is not  $\mathfrak{F}_l$-central in $G$. So $G/C_G(H/K)\not\in F(p)$ for some $p\in\pi(H/K)$ by Lemma \ref{l1}. It means that    $G/C_G(H/K)$ contains an $s$-critical for $F(p)$ subgroup  $S/C_G(H/K)$. Since   $\mathrm{Int}_{\mathfrak{F}_l}(G)=\mathrm{Z}_{\mathfrak{F}_l}(G)$ holds for every group $G$,    $S/C_G(H/K)\in\mathfrak{F}_l$ by \cite[Theorem A]{h4}. Therefore $S/C_G(H/K)\in F(p)$ by $(c)$, a contradiction. Thus $H/K$ is    $\mathfrak{F}_l$-central in $G$.

$(2)$ Let show that  $\mathrm{Int}_{\mathfrak{F}_l}(G)=\mathrm{Z}_{\mathfrak{F}_l}(G)$  holds for every group $G$. Assume the contrary. Then there exists an $s$-critical for $f(p)$ group $G\not\in\mathfrak{F}_l$ for some $p\in\mathbb{P}$ by  \cite[Theorem A]{h4}. We may assume that $G$ is a minimal group with this property. Then $\mathrm{O}_p(G)=\Phi(G)=1$ and $G$ has an unique minimal normal subgroup  by \cite[Lemma 2.10]{h4}. Note that $G$ is also  $s$-critical for $\mathfrak{F}_l$.

Assume that $G\not\in\mathfrak{F}$. Then there exists a simple $\mathbb{F}_pG$-module $V$ which is faithful for $G$ by \cite[10.3B]{s8}. Let $T=V\rtimes G$. Note that $T\not\in\mathfrak{F}$.  Let $M$ be a maximal subgroup of $T$. If $V\leq M$, then $M=M\cap VG=V(M\cap G)$, where $M\cap G$ is a maximal subgroup of $G$. From $M\cap G\in f(p)$ and $f(p)=\mathfrak{N}_pf(p)$ it follows that $V(M\cap G)=M\in f(p)\subseteq\mathfrak{F}_l\subseteq\mathfrak{F}$. Hence $M$ is an $\mathfrak{F}$-maximal   subgroup of $G$. If $V\not\leq M$, then $M\simeq T/V\simeq G\not\in\mathfrak{F}_l$. Now it is clear that the sets of all maximal $\mathfrak{F}_l$-subgroups and all $\mathfrak{F}$-maximal subgroups of $T$ coincide. Therefore $V$ is the intersection of all   $\mathfrak{F}$-maximal subgroups of $T$. From $T\simeq V\rtimes T/C_T(V)\not\in\mathfrak{F}$ it follows that $V\not\leq\mathrm{Z}_{\mathfrak{F}}(T)$, a contradiction.

Assume that $G\in\mathfrak{F}$. Let $N$ be a minimal normal subgroup of $G$.  If $N$ is abelian, then $N$ is a $p$-group. Hence  $G/C_G(N)\in F(p)=f(p)$. So $N$ is an $\mathfrak{F}_l$-central chief factor of $G$ by Lemma \ref{l1}.
 Therefore $N\leq\mathrm{Z}_{\mathfrak{F}_l}(G)$. Since $N$ is an unique minimal normal subgroup of       $s$-critical for  $\mathfrak{F}_l$-group $G$ and $\Phi(G)=1$, we see that $G/N\in\mathfrak{F}_l$. Hence $G\in\mathfrak{F}_l$, a contradiction. Thus $N$ is non-abelian.

Let $p\in\pi(N)$. According to \cite{20}, there is a
 Frattini $\mathbb{F}_pG$-module $A$ which is faithful for $G$. By known Gasch\"{u}tz
 theorem \cite{41}, there
exists a Frattini extension  $A\rightarrowtail R\twoheadrightarrow G$
such that $A\stackrel {G}{\simeq} \Phi(R)$ and $R/\Phi(R)\simeq G$.

 Assume that $R\in\mathfrak{F}$. Note that $C^p(R)\leq C_R(\Phi(R))$. So $$R/\Phi(R)= R/C_R(\Phi(R))\simeq G\in F(p)=f(p)\subseteq \mathfrak{F}_l,$$ a contradiction. Hence $R\not\in\mathfrak{F}$.

 Let $M$ be a maximal subgroup of $R$. Then $M/\Phi(R)$ is isomorphic to a maximal subgroup of $G$. So  $M/\Phi(R)\in f(p)$. From $\mathfrak{N}_pf(p)=f(p)$ it follows  that $M\in f(p)\subseteq\mathfrak{F}_l\subseteq\mathfrak{F}$. Hence the sets of maximal and $\mathfrak{F}$-maximal subgroups of $R$ coincide.  Thus $\Phi(R)=\mathrm{Z}_{\mathfrak{F}}(R)$. From $R/\mathrm{Z}_{\mathfrak{F}}(R)\simeq G\in\mathfrak{F}$ it follows that $R\in\mathfrak{F}$, the final contradiction.
\end{proof}

\begin{proposition}\label{p8}
  Let $\mathfrak{N}\subseteq\mathfrak{F}$ be a hereditary saturated formation. If $H/K\not\in\mathfrak{F}$ is a chief factor of $G$ below  $\mathrm{Int}_{\mathfrak{F}_{\mathfrak{J}cs}}(G)$, then it is a simple group.
\end{proposition}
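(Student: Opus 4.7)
My plan is to argue by contradiction, assuming $H/K$ is not simple. First I observe that $\mathfrak{N}\subseteq\mathfrak{F}$ implies every abelian chief factor of $G$ lies in $\mathfrak{F}$, so the hypothesis $H/K\notin\mathfrak{F}$ forces $H/K$ to be non-abelian. Then $H/K=V_1\times\cdots\times V_n$ with each $V_i$ isomorphic to a single non-abelian simple group $S$, and since $\mathfrak{F}$ is a formation (closed under direct products and quotients) we have $S\notin\mathfrak{F}$, and therefore $S^k\notin\mathfrak{F}$ for every $k\geq 1$. Because $H/K$ is a chief factor of $G$, the group $G$ permutes $\{V_1,\ldots,V_n\}$ transitively; assuming for contradiction that $n\geq 2$, the kernel $G_0=\bigcap_{i=1}^n N_G(V_i)$ of this permutation action is a proper subgroup of $G$.

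The key lemma I will prove is: every $\mathfrak{F}_{\mathfrak{J}cs}$-subgroup $M$ of $G$ that contains $H$ satisfies $M\leq G_0$. Since $K\triangleleft M$ and $H\triangleleft M$, the subquotients of a chief series of $M$ passing through $K$ and $H$ are precisely the chief factors of the $M$-module $H/K$. By Lemma \ref{ls5}, every normal subgroup of $H/K$ is a product of some of the $V_i$, so the $M$-invariant ones are exactly the products corresponding to unions of $M$-orbits on $\{V_1,\ldots,V_n\}$. Each $M$-chief factor between $K$ and $H$ is therefore isomorphic to $S^k$ for some $M$-orbit size $k$. Since $S^k\notin\mathfrak{F}$, such a factor cannot be an $\mathfrak{F}$-central $\mathfrak{F}$-factor; the definition of $\mathfrak{F}_{\mathfrak{J}cs}$ then forces it to be a simple $\mathfrak{J}$-group, and simplicity gives $k=1$. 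Hence $M$ normalises every $V_i$, i.e.\ $M\leq G_0$.

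To finish, choose any $g\in G\setminus G_0$, which exists because $G$ acts nontrivially on $\{V_1,\ldots,V_n\}$ when $n\geq 2$. The cyclic group $\langle g\rangle$ is nilpotent and abelian, so every chief factor is central and lies in $\mathfrak{N}\subseteq\mathfrak{F}$; by Lemma \ref{l1} each such factor is $\mathfrak{F}$-central, so $\langle g\rangle\in\mathfrak{F}_{\mathfrak{J}cs}$. Extend it to an $\mathfrak{F}_{\mathfrak{J}cs}$-maximal subgroup $M$ of $G$. By hypothesis $H\leq\mathrm{Int}_{\mathfrak{F}_{\mathfrak{J}cs}}(G)\leq M$, and the key lemma gives $M\leq G_0$, contradicting $g\in M\setminus G_0$. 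The main technical obstacle is the structural identification of the $M$-chief factors between $K$ and $H$ via Lemma \ref{ls5}; once that is in place the rest is routine bookkeeping.
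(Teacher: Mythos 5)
Your argument is correct and is essentially the paper's own proof: both decompose $H/K$ into simple direct factors via Lemma \ref{ls5}/\ref{l3}, show that any $\mathfrak{F}_{\mathfrak{J}cs}$-maximal subgroup containing $H$ must have simple chief factors inside $H/K$ (since $S^k\notin\mathfrak{F}$ forces them to be simple $\mathfrak{J}$-groups) and hence normalises each $V_i$, and then use $\mathfrak{N}\subseteq\mathfrak{F}$ to place every element of $G$ in such a maximal subgroup, making each $V_i$ normal in $G/K$. Your reformulation via the kernel $G_0$ of the permutation action is only a cosmetic difference.
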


\begin{proof}
Note that $H/K=H_1/K\times\dots\times H_n/K$ is a direct product of isomorphic simple non-abelian groups. Let $M$ be a $\mathfrak{F}_{\mathfrak{J}cs}$-maximal subgroup of $G$. Then $H/K$ is a direct product of minimal normal subgroups of $M/K$ by Lemma \ref{l3}. From $M/K\in\mathfrak{F}_{\mathfrak{J}cs}$ it follows that all these minimal normal subgroups are simple $\mathfrak{J}$-groups. According to Lemma \ref{ls5}, all these minimal normal subgroups are  $H_1/K,\dots, H_n/K$.

Thus  $H_1/K,\dots, H_n/K$ are normal in $M/K$ for every  $\mathfrak{F}_{\mathfrak{J}cs}$-maximal subgroup $M$ of $G$. From $\mathfrak{N}\subseteq\mathfrak{F}$ it follows that for every $x\in G$ there is a    $\mathfrak{F}_{\mathfrak{J}cs}$-maximal subgroup $M$ of $G$ with $x\in M$. It means that  $H_1/K,\dots, H_n/K$ are normal in $G/K$. Since $H/K$ is a chief factor of $G$,   $H/K=H_1/K$ is a simple group.
\end{proof}

\begin{proposition}\label{p7}
  Let $\mathfrak{F}$ be a hereditary saturated formation. Then $\mathrm{Z}_{\mathfrak{F}_{\mathfrak{J}cs}}(G)\leq \mathrm{Int}_{\mathfrak{F}_{\mathfrak{J}cs}}(G)$.
\end{proposition}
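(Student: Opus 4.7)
The plan is to show that the $\mathfrak{F}_{\mathfrak{J}cs}$-hypercentre $Z:=\mathrm{Z}_{\mathfrak{F}_{\mathfrak{J}cs}}(G)$ is contained in every $\mathfrak{F}_{\mathfrak{J}cs}$-maximal subgroup $M$ of $G$; intersecting over all such $M$ will then give the inclusion. By maximality of $M$ this reduces to $MZ\in\mathfrak{F}_{\mathfrak{J}cs}$. Since $MZ/Z\simeq M/(M\cap Z)\in\mathfrak{F}_{\mathfrak{J}cs}$, the chief factors of $MZ$ above $Z$ already satisfy the required condition, and only the chief factors of $MZ$ strictly inside $Z$ need verification. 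I would refine a $G$-chief series through $Z$ (whose initial terms are $G$-, hence $MZ$-, normal) into an $MZ$-chief series and consider an $MZ$-chief factor $L/K$ contained in a $G$-chief factor $N_j/N_{j-1}\le Z$, which is $\mathfrak{F}_{\mathfrak{J}cs}$-central in $G$ by definition of $Z$.

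Three cases then arise. If $N_j/N_{j-1}$ is an abelian $p$-group, then Proposition \ref{pj1} and Lemma \ref{l1} yield $G/C_G(N_j/N_{j-1})\in F(p)$; since $\mathfrak{F}$ is hereditary, Lemma \ref{l0} makes $F(p)$ hereditary, so the subgroup $(MZ)/C_{MZ}(N_j/N_{j-1})$, and its quotient $(MZ)/C_{MZ}(L/K)$, lie in $F(p)$, whence $L/K$ is $\mathfrak{F}_{\mathfrak{J}cs}$-central in $MZ$. If $N_j/N_{j-1}$ is non-abelian and not in $\mathfrak{F}$, then the $\mathfrak{F}_{\mathfrak{J}cs}$-central semidirect product $(N_j/N_{j-1})\rtimes G/C_G(N_j/N_{j-1})$ contains $N_j/N_{j-1}$ as a non-$\mathfrak{F}$ chief factor, and Definition \ref{def1} forces it to be a simple $\mathfrak{J}$-group; hence $L/K=N_j/N_{j-1}$ is already a simple $\mathfrak{J}$-group and nothing further is required.

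The delicate case is $N_j/N_{j-1}\simeq S^r$ with $S\in\mathfrak{J}\cap\mathfrak{F}$. The main idea is to upgrade $\mathfrak{F}_{\mathfrak{J}cs}$-centrality to $\mathfrak{F}$-centrality. Set $Q:=G/C_G(N_j/N_{j-1})\in\mathfrak{F}_{\mathfrak{J}cs}$; the image of $N_j$ in $Q$ is a normal subgroup isomorphic to $N_j/N_{j-1}$, and it is in fact a chief factor of $Q$ (because $G$, and hence $Q$, permutes the simple factors of $N_j/N_{j-1}$ transitively), lying in $\mathfrak{F}$. Since $N_j/N_{j-1}$ has trivial centre and $Q$ acts faithfully, the centralizer of this chief factor in $Q$ is trivial, so Definition \ref{def1} applied to $Q\in\mathfrak{F}_{\mathfrak{J}cs}$ combined with Lemma \ref{l1} forces $Q\in\mathfrak{F}$. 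Hereditariness of $\mathfrak{F}$ then delivers $(MZ)/C_{MZ}(N_j/N_{j-1})\in\mathfrak{F}$ and, after quotienting, $(MZ)/C_{MZ}(L/K)\in\mathfrak{F}$, so $L/K$ is $\mathfrak{F}$-central---and therefore $\mathfrak{F}_{\mathfrak{J}cs}$-central---in $MZ$. The potential obstacle is precisely this non-abelian $\mathfrak{F}$-case: $\mathfrak{F}_{\mathfrak{J}cs}$ is only normally hereditary by Proposition \ref{jsn}, so direct restriction of $Q$ to $(MZ)/C_{MZ}(L/K)$ is unavailable, and the upgrade from $\mathfrak{F}_{\mathfrak{J}cs}$- to $\mathfrak{F}$-centrality is the device that makes the hereditariness of $\mathfrak{F}$ applicable.
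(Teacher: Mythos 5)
Your proposal is correct and follows essentially the same route as the paper: reduce to showing $M\mathrm{Z}_{\mathfrak{F}_{\mathfrak{J}cs}}(G)\in\mathfrak{F}_{\mathfrak{J}cs}$, refine a $G$-chief series through the hypercentre by Jordan--H\"older, and transfer the $\mathfrak{F}_{\mathfrak{J}cs}$-centrality of each $G$-chief factor down to the product via hereditariness. The only (harmless) divergence is that the paper treats abelian and non-abelian $\mathfrak{F}$-factors uniformly through the hereditary canonical local definition $F(p)$ (Lemmas \ref{l1} and \ref{l0}) instead of your separate faithful-action upgrade to $G/C_G(H/K)\in\mathfrak{F}$, and your appeal to Proposition \ref{pj1} in the abelian case quietly imports the hypothesis $\mathfrak{N}\subseteq\mathfrak{F}$ (not in the statement of the proposition, though avoidable exactly as in your non-abelian case and satisfied wherever the proposition is applied).
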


\begin{proof} Let $\mathfrak{F}$ be a hereditary saturated formation with the canonical local definition $F$,
$M$ be a    $\mathfrak{F}_{\mathfrak{J}cs}$-maximal subgroup of $G$ and $N=M\mathrm{Z}_{\mathfrak{F}_{\mathfrak{J}cs}}(G)$. Let show that $N\in \mathfrak{F}_{\mathfrak{J}cs}$. It is sufficient to show that for every chief factor $H/K$ of $N$ below $\mathrm{Z}_{\mathfrak{F}_{\mathfrak{J}cs}}(G)$ either $H/K\in\mathfrak{F}$ and $H/K$  is $\mathfrak{F}$-central in $N$ or $H/K\not\in\mathfrak{F}$ is a simple $\mathfrak{J}$-group. Let $1=Z_0\trianglelefteq Z_1\trianglelefteq\dots\trianglelefteq Z_n=\mathrm{Z}_{\mathfrak{F}_{\mathfrak{J}cs}}(G)$ be  a chief series of $G$ below $\mathrm{Z}_{\mathfrak{F}_{\mathfrak{J}cs}}(G)$. Then we may assume that $Z_{i-1}\leq K\leq H\leq Z_i$ for some $i$ by the Jordan-H\"{o}lder theorem.
Note that $$ (Z_i/Z_{i-1})\rtimes G/C_G(Z_i/Z_{i-1})\in\mathfrak{F}_{\mathfrak{J}cs}.$$
Hence if $Z_i/Z_{i-1}\not\in\mathfrak{F}$, then $Z_i/Z_{i-1}$ is a simple $\mathfrak{J}$-group. So $H/K=Z_i/Z_{i-1}$ is a simple $\mathfrak{J}$-group.
If $Z_i/Z_{i-1}\in\mathfrak{F}$, then it is   an $\mathfrak{F}$-central chief factor of $(Z_i/Z_{i-1})\rtimes G/C_G(Z_i/Z_{i-1})$. In this case $G/C_G(Z_i/Z_{i-1})\in F(p)$ for all $p\in\pi(Z_i/Z_{i-1})$ by Lemma \ref{l1}. Since $F(p)$ is hereditary by Lemma \ref{l0}, $$NC_G(Z_i/Z_{i-1})/C_G(Z_i/Z_{i-1})\simeq N/C_N(Z_i/Z_{i-1})\in F(p)$$ for all $p\in\pi(Z_i/Z_{i-1})$. Note that $N/C_N(H/K)$  is a quotient group of $N/C_N(Z_i/Z_{i-1})$. Thus $H/K$ is  an $\mathfrak{F}$-central chief factor of $N$  by Lemma \ref{l1}.

Thus $N\in\mathfrak{F}_{\mathfrak{J}cs}$. So $N=M\mathrm{Z}_{\mathfrak{F}_{\mathfrak{J}cs}}(G)=M$. Therefore $\mathrm{Z}_{\mathfrak{F}_{\mathfrak{J}cs}}(G)\leq M$ for every $\mathfrak{F}_{\mathfrak{J}cs}$-maximal subgroup $M$ of $G$.
\end{proof}

\begin{proposition}\label{p9}
  Let $\mathfrak{N}\subseteq\mathfrak{F}$ be a hereditary saturated formation. If $H$ is a normal $\mathfrak{J}$-subgroup of $G$ then  $H\leq \mathrm{Int}_{\mathfrak{F}_{\mathfrak{J}cs}}(G)$.
\end{proposition}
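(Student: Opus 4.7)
The plan is to fix an arbitrary $\mathfrak{F}_{\mathfrak{J}cs}$-maximal subgroup $M$ of $G$ and show that $H\leq M$; taking the intersection over all such $M$ then yields the conclusion. Since $H\trianglelefteq G$, the product $N=HM$ is a subgroup of $G$ that contains $M$. The key is to prove $N\in\mathfrak{F}_{\mathfrak{J}cs}$: the maximality of $M$ will then force $N=M$ and hence $H\leq M$. Because $H$ is simple and $H\cap M\trianglelefteq H$, either $H\leq M$ (done) or $H\cap M=1$, in which case $N=H\rtimes M$ and $N/H\cong M$.

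I would then analyze the chief factors of $N$. Since $H$ is simple and normal in $N$, the only $N$-invariant subgroup properly contained in $H$ is trivial, so $H$ itself is a chief factor of $N$; by the correspondence theorem every other chief factor of $N$ lies above $H$ and corresponds to a chief factor of $N/H\cong M$. For such a chief factor $U/V$ of $N$ with $H\leq V$, the inclusion $[H,U]\leq H\leq V$ gives $H\leq C_N(U/V)$, so that $N/C_N(U/V)\cong M/C_M(\overline{U}/\overline{V})$ and $U/V\cong\overline{U}/\overline{V}$ as abstract groups, where bars denote passage to $N/H$. Lemma~\ref{l1} then transfers both $\mathfrak{F}$-centrality and the simple $\mathfrak{J}$-property from each chief factor of $M\in\mathfrak{F}_{\mathfrak{J}cs}$ to the corresponding chief factor of $N$ above $H$.

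It remains to handle the chief factor $H$ itself, which is a simple $\mathfrak{J}$-group. If $H\notin\mathfrak{F}$, Definition~\ref{def1} is satisfied via the \emph{other chief factors} clause. The main obstacle is the case $H\in\mathfrak{F}$, where one must verify that $H$ is $\mathfrak{F}$-central in $N$, equivalently that $N/C_N(H)\in F(p)$ for every $p\in\pi(H)$ by Lemma~\ref{l1}(2), where $F$ is the canonical local definition of $\mathfrak{F}$. For this I would exploit the embedding $N/C_N(H)\hookrightarrow\mathrm{Aut}(H)$, observing that $HC_N(H)/C_N(H)\cong H\in\mathfrak{F}$ is a normal subgroup with quotient that is simultaneously a section of the soluble group $\mathrm{Out}(H)$ (by Schreier) and a quotient of $M\in\mathfrak{F}_{\mathfrak{J}cs}$. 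Combining this with $H\in F(p)$ for all $p\in\pi(H)$ (which comes from Lemma~\ref{lz} applied to $H\in\mathfrak{F}$ together with Lemma~\ref{l1} applied to $H$ as a chief factor of itself) and the hereditariness of $F(p)$ (Lemma~\ref{l0}) should yield $N/C_N(H)\in F(p)$. Once $N\in\mathfrak{F}_{\mathfrak{J}cs}$ is established, the maximality argument completes the proof.
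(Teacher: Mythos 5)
Your overall strategy --- set $N=MH$ for an $\mathfrak{F}_{\mathfrak{J}cs}$-maximal subgroup $M$, run a chief series of $N$ through the minimal normal subgroup $H$, and conclude $N=M$ by maximality --- is exactly the route the paper takes, and your transfer of chief factors of $N/H$ to chief factors of $N$ above $H$ is sound. Two points need attention, though. First, the claim that $H\cap M\trianglelefteq H$ is false: from $H\trianglelefteq G$ one only gets $H\cap M\trianglelefteq M$, and $H$ need not normalize $H\cap M$ (take $G=S_5$, $H=A_5$, $M=S_4$, so $H\cap M=A_4$), so the dichotomy ``$H\leq M$ or $H\cap M=1$'' does not hold. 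This error is harmless because the case split is unnecessary: $N/H\simeq M/(M\cap H)$ is a quotient of $M$ and therefore lies in $\mathfrak{F}_{\mathfrak{J}cs}$ simply because $\mathfrak{F}_{\mathfrak{J}cs}$ is a formation (Proposition \ref{pj0}); this is precisely the one-line observation the paper uses.

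Second, and more seriously, your treatment of the bottom chief factor when $H\in\mathfrak{F}$ has a genuine gap. You correctly reduce to showing $N/C_N(H)\in F(p)$ for all $p\in\pi(H)$, and you correctly obtain $H\simeq HC_N(H)/C_N(H)\in F(p)$ and the solubility of the quotient $N/HC_N(H)$ via Schreier. But a normal $F(p)$-subgroup with soluble quotient does not force membership in $F(p)$: formations are not extension closed (already $F(p)=\mathfrak{N}$ and $A_4=V_4\rtimes C_3$ defeat this pattern), and neither the hereditariness of $F(p)$ nor the fact that the top section is also an image of $M$ closes the argument as stated. It is only fair to add that the paper's own proof does not confront this case at all: it records that $H$ is a simple $\mathfrak{J}$-group and invokes Definition \ref{def1}, which suffices only under the reading that a chief factor lying in $\mathfrak{J}$ need not be $\mathfrak{F}$-central even when it happens to lie in $\mathfrak{F}$. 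So you have isolated a real subtlety that the paper glosses over, but your proposed resolution of it does not go through as written.
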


\begin{proof}
  Let $M$ be a $\mathfrak{F}_{\mathfrak{J}cs}$-maximal subgroup of $G$. Since $MH/H\simeq M/M\cap H\in \mathfrak{F}_{\mathfrak{J}cs}$,  every chief factor of $MH$ above $H$ is either $\mathfrak{F}$-central or simple $\mathfrak{J}$-group. Since $H$ is a simple $\mathfrak{J}$-group, $MH\in\mathfrak{F}_{\mathfrak{J}cs}$ by the definition of $\mathfrak{F}_{\mathfrak{J}cs}$.
\end{proof}

\subsection{Proof of Theorem \ref{T3}}

 $(1)\Rightarrow(2)$.  Suppose that  $\mathrm{Z}_{\mathfrak{F}}(G)= \mathrm{Int}_{\mathfrak{F}}(G)$ holds for every group $G$ and $(\mathrm{Out}G\,|\,G\in\mathfrak{J})\subseteq\mathfrak{F}$.
     Let show that  $\mathrm{Int}_{\mathfrak{F}_{\mathfrak{J}cs}}(G)=\mathrm{Z}_{\mathfrak{F}_{\mathfrak{J}cs}}(G)$ also holds for every group $G$. Let    $H/K$ be a chief factor of $G$ below $\mathrm{Int}_{\mathfrak{F}_{\mathfrak{J}cs}}(G)$.

  $(a)$ \emph{If $H/K\in\mathfrak{F}$, then it is $\mathfrak{F}_{\mathfrak{J}cs}$-central in $G$.}

Directly follows from  Theorem \ref{t3}, Definition \ref{def1} and $\mathfrak{F}\subseteq \mathfrak{F}_{\mathfrak{J}cs}$.

   $(b)$ \emph{If $H/K\not\in\mathfrak{F}$, then it is $\mathfrak{F}_{\mathfrak{J}cs}$-central in $G$.}

   By Proposition \ref{p8}, $H/K$ is a simple $\mathfrak{J}$-group. Now $G/C_G(H/K)\simeq T$ and $H/K\simeq\mathrm{Inn}(H/K)\leq T\leq\mathrm{Aut}(H/K)$. Since $\mathrm{Out}(H/K)\in\mathfrak{F}$ and
   $\mathfrak{F}$ is hereditary, we see that   $T/\mathrm{Inn}(H/K)\in\mathfrak{F}$. Thus
     $$\mathrm{Soc}(H/K\rtimes G/C_G(H/K))=(H/K\rtimes G/C_G(H/K))^\mathfrak{F}\simeq H/K\times H/K$$ is a direct product of two simple normal $\mathfrak{J}$-subgroups of $H/K\rtimes G/C_G(H/K)$. Thus $H/K\rtimes G/C_G(H/K)\in\mathfrak{F}_{\mathfrak{J}cs}$ by Theorem \ref{T2}. Now $H/K$ is $\mathfrak{F}_{\mathfrak{J}cs}$-cental in $G$ by Lemma \ref{l1}.

  $(c)$ \emph{$\mathrm{Z}_{\mathfrak{F}_{\mathfrak{J}cs}}(G)= \mathrm{Int}_{\mathfrak{F}_{\mathfrak{J}cs}}(G)$.}

  From   $(a)$ and $(b)$ it follows that   $\mathrm{Int}_{\mathfrak{F}_{\mathfrak{J}cs}}(G)\leq\mathrm{Z}_{\mathfrak{F}_{\mathfrak{J}cs}}(G)$.
  By Proposition \ref{p7}, $\mathrm{Z}_{\mathfrak{F}_{\mathfrak{J}cs}}(G)\leq \mathrm{Int}_{\mathfrak{F}_{\mathfrak{J}cs}}(G)$. Thus $\mathrm{Z}_{\mathfrak{F}_{\mathfrak{J}cs}}(G)= \mathrm{Int}_{\mathfrak{F}_{\mathfrak{J}cs}}(G)$.

  $(2)\Rightarrow(1)$. Assume that $\mathrm{Z}_{\mathfrak{F}_{\mathfrak{J}cs}}(G)= \mathrm{Int}_{\mathfrak{F}_{\mathfrak{J}cs}}(G)$ holds for every group $G$. Since $\mathfrak{F}$ is saturated, we see that $(\mathfrak{F}_{\mathfrak{J}cs})_l=\mathfrak{F}_l=\mathfrak{F}$ by Proposition \ref{p6}. Let $F$ be a composition definition of $\mathfrak{F}_{\mathfrak{J}cs}$. Then $F(p)\subseteq\mathfrak{F}$ by Proposition \ref{pj1} and Theorem \ref{t4}.   Now $\mathrm{Z}_{\mathfrak{F}}(G)= \mathrm{Int}_{\mathfrak{F}}(G)$ holds for every group $G$ by $(2)$ of Theorem \ref{t3}.

  Let show that $(\mathrm{Out}(G)\,|\,G\in\mathfrak{J})\subseteq\mathfrak{F}$.
  Assume the contrary. Then there is a $\mathfrak{J}$-group $G$ such that   $\mathrm{Out}(G)\not\in\mathfrak{F}$. By Proposition \ref{p9}, $G\simeq \mathrm{Inn}(G)\leq\mathrm{Int}_{\mathfrak{F}_{\mathfrak{J}cs}}(\mathrm{Aut}(G))$. Hence
  $$H=\mathrm{Inn}(G)\rtimes \mathrm{Aut}(G)/C_{\mathrm{Aut}(G)}(\mathrm{Inn}(G))\simeq G\rtimes\mathrm{Aut}(G)\in\mathfrak{F}_{\mathfrak{J}cs}.$$
   So $\mathrm{Out}(G)\in\mathfrak{F}_{\mathfrak{J}cs}.$ By Schreier conjecture, $\mathrm{Out}(G)$ is soluble. Thus  $\mathrm{Out}(G)\in\mathfrak{F}$, the contradiction.

  \subsection*{Acknowledgments}

I am grateful to A.\,F. Vasil'ev for helpful discussions.








\end{document}